\definecolor{blue}{rgb}{0.0, 0.0, 1.0}
\definecolor{airforceblue}{rgb}{0.36, 0.54, 0.66}
\definecolor{azure(colorwheel)}{rgb}{0.0, 0.5, 1.0}
\definecolor{navyblue}{rgb}{0.0, 0.0, 0.5}
\definecolor{aoe}{rgb}{0.0, 0.5, 0.0}
\definecolor{beaublue}{rgb}{0.74, 0.83, 0.9}
\definecolor{brown}{rgb}{0.59, 0.29, 0.0}
\definecolor{teal}{rgb}{0.0, 0.5, 0.5}
\newtheorem{theorem}{Theorem}
\newtheorem{lemma}{Lemma}
\newtheorem{example}{Example}
\begin{document}
	\title{Coercive Inequalities on Carnot Groups: Taming Singularities}
	
	\author[1]{E. Bou Dagher\thanks{Esther Bou Dagher: esther.bou-dagher17@imperial.ac.uk}}
\author[2]{B. Zegarli\'{n}ski\thanks{Bogus\l{}aw Zegarli\'{n}ski: boguslaw.zegarlinski@cnrs.fr}}
\affil[1]{Department of Mathematics, Imperial College London}
\affil[2]{Institut de Mathématiques de Toulouse ; UMR5219}

	\maketitle
	\begin{abstract}
	
	In the setting of Carnot groups, we propose an approach of taming singularities to get coercive inequalities. To this end, we develop a technique to introduce natural singularities in the energy function $U$ in order to force one of the coercivity conditions. In particular, we explore explicit constructions of probability measures on Carnot groups which secure Poincar{\' e} and even Logarithmic Sobolev inequalities. 
	As applications, we get analogues of the Dyson-Ornstein-Uhlenbeck model on the Heisenberg group and obtain results on the discreteness of the spectrum of related Markov generators.

		\tableofcontents{}
	\end{abstract}

	\section{Introduction}
	
	In the literature, there exists many models in the Euclidean space in which one may have a system with a one-particle potential perturbed by a singular multi-particle interaction. Such systems exhibit more intricate behaviour and are technically much more complicated. One famous example is provided by quantum harmonic oscillators coupled by Coulomb potential, see e.g. \cite{Du}, \cite{Ro}, \cite{RoV} and therein for an extensive earlier literature, and \cite{V} for a more recent study of Poincaré and Log-Sobolev inequalities in the related setup.
	Another interesting class of models involving singular potentials are related to random matrix theory, see e.g. \cite{D}, \cite{BCL} and references therein. In both cases, one also studies Markov generators and related stochastic processes. In the indicated models in the Euclidean space, the non-singular part of the potential provides a Markov generator with good behaviour with unique invariant measure, discrete spectrum, and satisfying various coercive inequalities (of Poincaré and Log-Sobolev type).
	For example, for a quantum harmonic oscillator, the ground state is provided by a Gaussian measure, and the related hamiltonian can be represented as the Ornstein-Uhlenbeck Markov generator, for which all the above-mentioned good properties are known. The addition of singular potential does not change the situation. When one moves to a space provided by nilpotent Lie groups, there exists many topologically equivalent metrics which could be naturally used a basis for a quadratic potential. For example, in the simplest case of the Heisenberg group, one could consider a harmonic oscillator potential given by the control distance (defined by the eikonal equation associated with the sub-gradient) or the Kaplan norm (which shows up when one considers the Green function of the sub-Laplacian). 
	It appears that the corresponding analogues of Gaussian measures in both cases are dramatically different. In the first case, we have similar coercive inequalities as in the Euclidean case. In the second, the Log-Sobolev inequality fails for certain fundamental reasons (see  \cite{HZ}) although Poincaré inequality still remains true.
	One of the things which  is demonstrated in our work is that given as a starting point the bad case (with the potential quadratic in Kaplan norm), by adding a suitable singularity, we obtain an analogue of the Dyson-Ornstein-Uhlenbeck type operator for which strong properties are restored.

	\subsection{Coercive Inequalities Problem on Nilpotent Lie Groups} 
	\label{Sec.1}
	
\noindent Let $\mathbb{G}$ be a Carnot group 
with a given $n\in\mathbb{N}$ dependent on the group, and let $\nabla\equiv (X_i)_{i=1,..,n}$, and $\Delta \equiv \sum_{i=1,..,n} X_i^2 $
denote the corresponding sub-gradient and sub-Laplacian, respectively.

In this paper, we are interested in the coercive bounds of the following form
\[ \mu\left( |f|^q \mathcal{V}\right)\leq 
C\mu |\nabla f|^q +D \mu|f|^q,
\tag{*}
\]
with some $q\in[1,\infty)$, constants $C,D\in(0,\infty),$ and some function $\mathcal{V}$ independent of a function $f$. \\
 When $\mathcal{V}(x)$ diverges to infinity
with a distance $d(x,0)\to \infty$, such bounds have a multitude of applications to prove coercive inequalities, see e.g. \cite{R}, \cite{A}, \cite{HZ},
\cite{I}, \cite{IKZ}, \cite{WZ}, \cite{ChFZ}.
In particular, what  we are after is an interesting and very challenging problem of how to get coercive inequalities for probability measures on Carnot groups. Besides the direct interest in analysis on groups, additional motivation comes from the fact that such coercive inequalities play an important role in studying ergodicity properties of Markov semigroups, spectral theory, isoperimetry, etc..., in finite and infinite dimensional spaces
(see e.g. \cite{BoZ1}, \cite{BoZ2}, \cite{LZ}, \cite{KOZ}, \cite{RZ},\cite{RZ2} and references therein).

Some possible natural approaches to the coercive bounds (*) described above are as follows. 
First, as described in \cite{HZ}, one uses Leibniz rule together with integration by parts. For a probability measure of the form $d\mu=e^{-U} d\lambda$, we have
\[
(\nabla f)e^{-\frac12 U} = (\nabla f e^{-\frac12 U})+
\frac12 f (\nabla U) e^{-\frac12 U}.
\]
Hence, squaring and integrating both sides with respect to $\lambda$, we get
\[ \begin{split}
 &\int |\nabla f|^2 e^{-U}d\lambda = \\
& \int f^2  \frac14 |\nabla U|^2e^{-U} d\lambda +\int (\nabla f e^{-\frac12 U})\cdot (\nabla  U )f e^{-\frac12 U} d\lambda  + \int (\nabla f e^{-\frac12 U})^2 d\lambda, 
\end{split}\]
and after integration by parts in the middle term and dropping out the third term on the right-hand side, we arrive at  
\[ 
\int f^2 \left(\frac14 |\nabla U|^2 -\frac12 \Delta U\right) d\mu \leq \int |\nabla f|^2 d\mu.
\tag{$\diamond$}
\]
If one can show 
that the following potential
\[\mathcal{V}_2 \equiv \frac14 |\nabla U|^2 -\frac12 \Delta U \]
is locally bounded and diverges to infinity in all directions, then by
\cite{HZ}, 
one gets  Poincar{\'e} inequality for $\mu$ with the sub-gradient $\nabla$. 
In fact, assuming Poincar{\'e} in the balls for the Lebesgue measure $\lambda$ (\cite{J}) and local boundedness of $U$, in order to obtain the Poincar{\'e} inequality with $\mu$ and the sub-gradient, it is sufficient to show the following bound 
\[ 
\int f^2 \eta d\mu \leq  A \int |\nabla f|^2 d\mu
+B \int  f^2 d\mu,
\tag{$\diamond\diamond$}
\]
with a function $\eta$ diverging to infinity with the distance from the origin, and some constants $A,B\in(0,\infty)$ independent of $f$ for which integrals on the right-hand side are well-defined, \cite{HZ}.\\

If additionally one has
\[ |\nabla U|^2 + U \leq a (1+ \mathcal{V}_2), \]
then one gets Log-Sobolev type inequalities,
\cite{HZ}, (and possibly more non-tight coercive inequalities, \cite{R}, \cite{A},  which recently were tightened in \cite{WZ}). \\

Another point of view is to start from quasi-invariant measures and corresponding unitary groups,
{\'a} la \cite{AHK}, as follows.
Assuming the measure $\mu(\cdot\circ g)$, shifted by an action of the group element $g$, is absolutely continuous with respect to $\mu$, we define one parameter unitary groups
\[
S_t f(\omega)\equiv f(\omega\circ g_t) 
 \left( \frac{d\mu(\omega\circ g_t)}{d\mu(\omega)} \right)^\frac12.
\]
These groups have generators of the form
\[
\mathbb{X}f \equiv \nabla f -f\frac12 \nabla U,
\]
which satisfy the formula of integration by parts with the measure $\mu$. Hence, we can consider a Dirichlet operator
\[
\int f (-\mathcal{L} f) d\mu \equiv \int|\mathbb{X}f |^2 d\mu \geq 0, \tag{$\star\star$}
\]
defined on a dense domain.
A simple computation shows that
\[
\mathcal{L} f = \mathbb{X}^2 f = \Delta f - \nabla U\cdot \nabla f
+\left(\frac14 |\nabla U|^2- \frac12 \Delta U \right)f  \equiv L f + \mathcal{V}_2 f.
\]
Since the operator $L$ on the right-hand side is the Dirichlet operator associated with the sub-gradient
\[
\int f (-L f)d\mu = \int |\nabla f|^2 d\mu ,
\]
the relation ($\star\star$) implies the coercive bound ($\diamond$). \\

Finally, if one is interested in the following inequality on $\mathbb{G}$ (outside some ball)
	
	\[  - \frac12\Delta U +\frac14 |\nabla U|^2  \geq \eta(N) - B ,\]
	with $\eta(x)\underset{d(x,0)\to\infty}{\longrightarrow}\infty$ and a constant $B\in(0,\infty)$,   
	one should notice that
	\[ \Delta e^{-\frac12U}=\frac12 \nabla\cdot\left(-(\nabla U)e^{-\frac12U}\right)=
	\left(-\frac12 \Delta U +\frac14 |\nabla U|^2\right) e^{-\frac12 U}.
	\]
	Hence, our problem is equivalent to the following inequality
	\[
	\Delta e^{-\frac12 U} \geq (\eta -B) e^{-\frac12U},
	\]	
	and one could consider it as the following differential inequality
	\[ \label{bigstar}
	\begin{split}
	 \exists  \Phi>0 \qquad & \Delta   \Phi 
	 \geq    (\eta-B)  \Phi.   \\ & \Phi\underset{s\to\infty}{\longrightarrow}0
	\end{split}
\tag{{$\bigstar$}}
	\]
	In more general form, for example to obtain Log-Sobolev inequalities, we may also like to have 
	\[\label{2bigstars}
	\Delta \Phi
	\geq \eta\;  \Phi \log(\frac1{\Phi}). 
	\tag{{$\bigstar\bigstar$}}
	\]
Focusing for a moment on (\ref{bigstar}), we rewrite it as follows, for $\omega\in\Omega^c$, for some ball $\Omega\subset\mathbb{G}$,
\[ 
(-\Delta +\gamma)\Phi \leq - \tilde\eta \Phi,
\]
which can be represented as
\[
0\leq \Phi \leq \Psi_0 + \int_0^\infty dt e^{-\gamma t} e^{t\Delta} (\tilde \eta \Phi),
\tag{$\bigstar.1$}\]
with some $(-\Delta +\gamma)\Psi_0(\omega)=0$ for $\omega\in\Omega^c$, $\Psi_0(\omega)\underset{d(\omega,0)\to\infty}{\longrightarrow}0$ . At this point, we recall that the  heat kernel $p_t(w,v)$ on $\mathbb{G}$ has the following estimate, e.g. \cite{H},
\[
p_t(w,v)\leq C\; t^{-Q/2}\; e^{-\frac{d^2(w,v)}{2\alpha t} },
\]
with some positive constants $C,\alpha\in(0,\infty),$ and $Q$ denoting homogeneous dimension of the group. 

Hence, ({$\bigstar.1$}) gets the following form
\[
0\leq \Phi \leq \Psi_0 + \int_0^\infty dt e^{-\gamma t} C\; t^{-Q/2}\; e^{-\frac{d^2(w,v)}{2\alpha t} } (\tilde \eta \Phi).
\tag{$\bigstar.2$}\]
	
%
\vspace{0.25cm}	

Some problems of particular interest involve 
$\Phi,$ which is a function of a homogeneous norm on the Carnot group. In particular, choosing dependence on the control distance, we can ask for what Carnot group we can find a solution to the following problem
\[
-\Phi'(d) \Delta d
+ \Phi''(d) |\nabla d|^2
\geq \eta(d)\; \Phi (d).
\] 
Taking into the account that for the control distance by definition $|\nabla d|^2=1$, we obtain the following relation 
\[
 \Delta d
\leq \frac{\Phi'' (d) }{\Phi'(d)} - \eta(d)\; \frac{\Phi (d) }{\Phi'(d)},    \tag{$\star\star\star\star$}
\]
for points where $\Phi'(d)\neq 0$; e.g. in a special but already interesting case, one could ask that for a monotone $\Phi$ with maximum only at zero. 	\\
We notice that the Laplacian distance bounds are well-known and play an important role in certain problems in Riemannian geometry, \cite{D}, \cite{ChBPL}, 
but up to our knowledge, there is no literature on this problem in the context of analysis on nilpotent Lie groups. 

We remark that one could consider a similar problem with a function $\Phi$ of some other homogeneous norm on the group. We know that, if we switch to smooth homogeneous norms, we generally cannot get the Log-Sobolev inequality, \cite{HZ}. However, there are examples in the literature of Poincar{\'e} and weaker log$^\beta$ inequalities in such setup, see \cite{I}, \cite{IKZ}  for the Heisenberg group, \cite{BDZ1}, \cite{BDZ2}, for type-2 groups, and \cite{ChFZ} for filiform-type groups. Later in this paper, we provide some general constructions which work for more extensive classes of Carnot groups.
Thus, we are led to considering the following problem
\[
\Delta \Phi(K)
\geq \eta(K)\;  \Phi(K),
\]
involving some other homogeneous norms $K.$ Using equivalence of norms, we choose to have a function $\eta \equiv \eta(K)$.
In particular, for the Kaplan type norm $N$, we have
the following general relation, \cite{BLU}
\[
	\triangle N=
	\frac{(Q-1)|\nabla N|^{2}}{N},\label{eq:1.2}
\]
where $Q$ is the homogeneous dimension of the group.
Taking this into account, our problem looks as follows
 \[
 \left( \Phi''(N)  +   \Phi'(N)\frac{(Q-1)}{N}  
 \right) |\nabla N|^{2}
 \geq \eta(N)
  \Phi(N).
\] 
Unfortunately, for smooth homogeneous norms, $\nabla N$ vanishes in some directions, so we cannot have $\eta$ growing to infinity with the distance in all directions, and the pointwise strategy doesn't work.
However, in some cases, one can overcome this problem by other arguments and get Poincar{\'e} inequality (but not Log-Sobolev), \cite{I}, \cite{BDZ1},  \cite{BDZ2}, \cite{ChFZ}.
\\

In the situation where very little is known about coercive inequalities for probability measures on Carnot groups, we explore yet another direction in this paper. We propose an approach of taming the singularities, that is, we develop a technique to introduce natural singularities in the energy function $U$ in order to force one of the coercivity conditions. In particular, we explore explicit constructions of measures on type-2 Carnot groups, which secure the Poincar{\' e} inequality and even the Log-Sobolev inequality. 
In Section 2-4, we consider additive and multiplicative ways of introducing singularity (by an additive term and a multiplicative factor given by positive singular homogeneous functions of horizontal coordinates). It is interesting that by our technique, we can provide new examples of probability measures for which Poincar{\'e} or even Log-Sobolev inequalities can be satisfied. Moreover, we are able to provide classes of examples going far beyond the existing results of 
\cite{HZ}, \cite{I}, \cite{BDZ1},  \cite{BDZ2}, \cite{ChFZ}.
In particular, we have results for all type-2 Carnot groups as well some others general cases (possibly satisfying certain technical conditions for homogeneous norms). \\
Some of our constructions could be interpreted as a perturbation of the harmonic oscillator by a singular potential, and therefore, they can be considered as generalisations of the Dyson-Ornstein-Uhlebeck models in Euclidean space (see e.g. \cite{D}, \cite{BCL} and references therein).\\

In Section 5, we explore a universality hypothesis, which suggests a classification framework for possible coercivity theory on Carnot groups.

In section 6, we discuss briefly some consequences of our results, including ergodicity in $\mathbb{L}_2$ and discreteness of spectrum of related Markov generators.

	\newpage
 \section{Additive Taming}
 \label{Additive Taming}
 \subsection{Poincar{\'e} and Log-Sobolev inequalities on  Carnot Groups of Type 2}$\;$\\ 
 \label{Sec.2}	
	
	\vspace{0.25cm} 
\noindent Let $\mathbb{G}$ be a step-2 group, i.e. a group isomorphic to ${   \mathbb{R}^{n+m}}$
	with the group law
	
	\[
	\left(w,z\right)\circ\left(w',z'\right)=\left(w_{i}+w'_{i},~z_{j}+z'_{j}+\frac{1}{2}<\Lambda^{\left(j\right)}w,w'>\right)_{i=1,..,n; j=1,..,m}
	\]
	for $w,w'\in\mathbb{R}^{n},z,z'\in\mathbb{R}^{m}$, where the matrices
	$\Lambda^{\left(j\right)}$  are $n \times n$ skew-symmetric and linearly independent.

	For $i\in\left\{ 1,\ldots,n\right\}$ and $j\in\left\{ 1,\ldots,m\right\} $, let 
	\[ X_{i}=\frac{\partial}{\partial x_{i}}+\frac{1}{2}\sum_{k=1}^{m}\sum_{l=1}^{n}\Lambda_{il}^{\left(k\right)}x_{l}\frac{\partial}{\partial z_{k}}\qquad \textrm{ and } \qquad Z_{j}=\frac{\partial}{\partial z_{j}}.
	\]
	Later on, $\nabla \equiv (X_i)_{i=1,..,n}$ and $\Delta \equiv \sum_{i=1,..,n} X_i^2 $ will denote the associated sub-gradient  
	and sub-Laplacian, respectively.
	We consider the following smooth homogeneous norm on $\mathbb{G}$
	
	\begin{equation}\label{2.N} N\equiv \left( |x|^4+ a|z|^2\right)^{\frac{1}{4}} ,
	\end{equation}
	with $a\in(0,\infty)$

Define 
\[ \label{4D.U.1}
U\equiv  V(\beta N+\xi(x)) ,
\]
with a real twice differentiable function $V$ and  $\xi(x)\equiv \xi(|x|)$ to be specified later, and some constant $\beta\in(0,\infty)$.
We will assume that 
\[ 0< Z\equiv \int e^{-U}d\lambda <\infty, \]
and consider the following probability measure on $\mathbb{G}$
\[d\mu_U \equiv \frac1{Z}e^{-U}d\lambda. \]

For $\sigma\in(0,\infty)$, consider
\begin{equation} 	\label{eq:4.D.xi.1}
	\xi\equiv \xi(|x|)\equiv\frac{1}{|x|^\sigma}, 
\end{equation}
Then,  
\begin{equation} \label{eq:4.D.xi.2}
	\nabla\xi(x)   = -\frac{\sigma}{|x|^{1+\sigma}}\cdot\frac{x}{|x|},\qquad
	\Delta\xi(x)    = -  \sigma\frac{n-2-\sigma}{|x|^{2+\sigma}} 
\end{equation}  
We have
\begin{equation} \label{4D.U.2}
	\begin{split}
		|\nabla U|^2   &= |\beta \nabla N + \nabla \xi|^2 (V')^2 \\ 
		\Delta U &= 
		(\beta \Delta N +\Delta \xi)V' +|\beta \nabla N + \nabla \xi|^2 V'',
	\end{split}
\end{equation}  
and hence,
\begin{equation} \label{4D.U.3}
	\begin{split}
		\mathcal{V}_2 \equiv	\frac14|\nabla U|^2 -\frac12 	\Delta U =
		|\beta\nabla N + \nabla \xi|^2 	\left(\frac14(V')^2 -\frac12 V''\right)\\ 
		-\frac12
		(\beta\Delta N +\Delta \xi)V'.
	\end{split}
\end{equation}  
From our choice of $\xi$ with \eqref{eq:4.D.xi.2}, we get
\begin{equation} \label{4D.U.4}
	\begin{split}
		\mathcal{V}_2 
		=
		\left|\beta\nabla N   -\frac{\sigma}{|x|^{1+\sigma}}\cdot\frac{x}{|x|}\right|^2 	\left(\frac14(V')^2 - \frac12 V''\right)\\ 
		+\frac12
		\left(\sigma\frac{n-2-\sigma}{|x|^{2+\sigma}} - \beta \Delta N \right)V'.
	\end{split}
\end{equation}  

\subsection{Heisenberg Group Case}\label{2.Case.1H}
\begin{theorem} $\;$\\
	Let $N$ be the Kaplan norm on the Heisenberg group and let	$
		\xi\equiv \xi(|x|)\equiv\frac{1}{|x|^\sigma}$,
		 with $ \sigma\in(0,\infty)$.
	Suppose $\exists R\in(0,\infty)$ $\forall s\geq R$ we have
	\begin{equation}
		\begin{split}
			&V'(s)s^{-7} \geq D  \\
		& sV'(s) \geq BV(s)	\\
			&(V'(s))^2/V''(s)\underset{s\to\infty}{\longrightarrow} \infty ,
		\end{split}
	\end{equation}
with some constants $B,D >0$.
Then the probability measure 
\[d\mu_U\equiv \frac1Z e^{- V(\beta N+\xi(x)) } d\lambda\] 
satisfies Logarithmic Sobolev Inequality 
\[
\mu_U \left(f^2\log\frac{f^2}{\mu_U f^2}\right)\leq c\; \mu_U |\nabla f|^2
\]
with a constant $c\in(0,\infty)$ independent of a function $f$.

\end{theorem}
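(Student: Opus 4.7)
The overall strategy is to apply the general criterion from Section~1. Because Poincaré on Lebesgue balls and local boundedness of $U$ away from the singularity at $|x|=0$ are available, what must be checked is that the potential $\mathcal{V}_2 \equiv \tfrac14|\nabla U|^2-\tfrac12\Delta U$ diverges to infinity with $d(\cdot,0)$ in all directions (yielding Poincaré via ($\diamond\diamond$)), and that the auxiliary comparison $|\nabla U|^2+U\leq a(1+\mathcal{V}_2)$ holds (which upgrades Poincaré to Log-Sobolev by the \cite{HZ} criterion recalled in Section~1).

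The starting point is the explicit identity \eqref{4D.U.4}. On the Heisenberg group $n=2$, so $\sigma(n-2-\sigma)=-\sigma^{2}$, and using the Kaplan-norm identity $\Delta N=3|\nabla N|^{2}/N$ (since $Q=4$) one gets
\begin{equation*}
\mathcal{V}_2 = \left|\beta\nabla N-\tfrac{\sigma x}{|x|^{2+\sigma}}\right|^{2}\!\left(\tfrac14(V')^{2}-\tfrac12 V''\right)\;-\;\tfrac12\!\left(\tfrac{\sigma^{2}}{|x|^{2+\sigma}}+\tfrac{3\beta|\nabla N|^{2}}{N}\right)\!V'.
\end{equation*}
Hypothesis (iii), $(V')^{2}/V''\to\infty$, ensures $\tfrac14(V')^{2}-\tfrac12 V''\geq\tfrac18(V')^{2}$ for $s=\beta N+\xi$ large. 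Expanding the square produces a sum of two positive principal terms, $\beta^{2}|\nabla N|^{2}$ and $\sigma^{2}|x|^{-2-2\sigma}$, plus a cross term controlled by Cauchy--Schwarz against them.

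I would then verify divergence of $\mathcal{V}_2$ separately in each escape regime. When $|x|\to 0$, $\xi\to\infty$ so $s\to\infty$, and the positive piece $\tfrac{\sigma^{2}}{8}(V')^{2}|x|^{-2-2\sigma}$ dominates the negative piece $\tfrac{\sigma^{2}}{2}V'|x|^{-2-\sigma}$ by the factor $|x|^{\sigma}V'(s)$, which tends to infinity since $s\geq|x|^{-\sigma}$ and $V'\geq Ds^{7}$ by (i). When $N\to\infty$ with $|x|$ bounded away from $0$, $|\nabla N|$ is bounded below and $\tfrac{\beta^{2}}{8}|\nabla N|^{2}(V')^{2}$ beats $\tfrac{3\beta}{2N}|\nabla N|^{2}V'$ because $V'\to\infty$ while $1/N\to0$. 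The delicate regime is the vertical one, $|x|$ bounded but $|z|\to\infty$, where $|\nabla N|^{2}\asymp|x|^{2}/N^{2}$ is small; here the same comparison $V'\gg 1/N$ suffices, but only because hypothesis (i) provides the substantial polynomial margin $V'\geq Ds^{7}$, so $(V')^{2}|\nabla N|^{2}\to\infty$ even against the decay of $|\nabla N|^{2}$.

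For the Log-Sobolev upgrade, note that $|\nabla U|^{2}=(V')^{2}|\beta\nabla N+\nabla\xi|^{2}$ is, up to the factor $\tfrac14(V')^{2}-\tfrac12 V''\approx\tfrac14(V')^{2}$, exactly four times the first (positive) piece of $\mathcal{V}_2$, and the second (negative) piece of $\mathcal{V}_2$ has already been absorbed; thus $|\nabla U|^{2}\leq C\mathcal{V}_2+C'$. For the $U$-term, hypothesis (ii) gives $U=V(s)\leq s V'(s)/B$, and the growth $V'\geq Ds^{7}$ yields $sV'(s)\ll(V'(s))^{2}\cdot(\text{positive factor in the square})$ in each regime; the $s^{7}$ assumption is precisely what is needed to make $sV'$ negligible against the leading $(V')^{2}$ contribution uniformly. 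The single hardest point throughout is the vertical direction with vanishing $|\nabla N|^{2}$, which is what forces the strong lower bound $V'\geq Ds^{7}$ in hypothesis (i); once $\mathcal{V}_2\to\infty$ in all directions and the comparison with $|\nabla U|^{2}+U$ is established, the Log-Sobolev inequality follows from the criterion of \cite{HZ}.
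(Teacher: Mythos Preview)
Your overall strategy---verify that $\mathcal{V}_2\to\infty$ in every direction and that $|\nabla U|^2+U\le C(1+\mathcal{V}_2)$, then invoke the criterion of \cite{HZ}---is exactly the paper's. The difference is in how the key lower bound on $|\beta\nabla N+\nabla\xi|^2$ is obtained, and here your argument is looser than the paper's in a way that matters.

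On the Heisenberg group the paper uses the explicit identities $|\nabla N|^2=|x|^2/N^2$ and $\tfrac{x}{|x|}\cdot\nabla N=|x|^3/N^3$ to complete the square exactly:
\[
|\beta\nabla N+\nabla\xi|^2
=\Bigl(\tfrac{\sigma}{|x|^{1+\sigma}}-\beta\tfrac{|x|^3}{N^3}\Bigr)^2
+\beta^2 a\,\tfrac{|x|^2 z^2}{N^6}\;\ge\;0,
\]
and then argues that this sum is $\ge 1/N^6$ outside a fixed compact set. That single global bound is what makes the exponent $7$ in hypothesis (i) transparent: one needs $(V')^2\cdot N^{-6}$ to dominate $sV'(s)$ with $s\asymp\beta N$, i.e.\ $V'(s)\gtrsim s^{7}$. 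Your regime-by-regime analysis reaches the same conclusion but does not isolate this bound, so the role of $s^{7}$ remains heuristic.

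Two specific points to repair. First, your claim that ``when $N\to\infty$ with $|x|$ bounded away from $0$, $|\nabla N|$ is bounded below'' is false: $|\nabla N|=|x|/N\to 0$ whenever $|x|$ stays bounded and $N\to\infty$. The conclusion of that paragraph is still salvageable because $(V')^2|\nabla N|^2\gtrsim N^{14}\cdot |x|^2/N^2\to\infty$, but the stated reason is wrong. Second, ``Cauchy--Schwarz against them'' is too vague: applying $2|\nabla N||\nabla\xi|\le|\nabla N|^2+|\nabla\xi|^2$ kills both principal terms and yields nothing. What actually works is using the \emph{exact} inner product $\tfrac{x}{|x|}\cdot\nabla N=|x|^3/N^3$ (which is strictly smaller than $|\nabla N|=|x|/N$), and this is precisely the completion-of-square step the paper carries out. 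Once you make that substitution your argument coincides with the paper's.
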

\textit{Proof} :
For the Heisenberg group, with orthogonal matrices $\Lambda^{(k)}$ and the constant $a=16$, we have $|\nabla N| =  \frac{|x|}{N}$ and 
$\frac{x}{|x|}\cdot\nabla N=\frac{|x|^3}{N^3}$.
Since then $N$ is the Kaplan norm we get
$\Delta N= \frac{Q-1}{N}|\nabla N|^2$.
  Hence,
\[ \begin{split}
	\left|-\frac{\sigma}{|x|^{1+\sigma}}\cdot\frac{x}{|x|}+ \beta\nabla N\right|^{2}= \frac{\sigma^2}{|x|^{2(1+\sigma)}}  + 
	\beta^2  \frac{|x|^2}{N^2}
	- \frac{2\sigma\beta}{|x|^{1+\sigma}}\frac{|x|^3}{N^3}\\
	= \left(\frac{\sigma }{|x|^{(1+\sigma)}} 
	-  \beta \frac{|x|^3}{N^3}\right)^2
	+ 
	\beta^2  \frac{|x|^2}{N^2}\left(1
	-  \frac{|x|^4}{N^4} \right) \\
	= \left(\frac{\sigma }{|x|^{(1+\sigma)}} 
	-  \beta \frac{|x|^3}{N^3}\right)^2
	+ 
	\beta^2 a  \frac{|x|^2 z^2}{N^6}  
\end{split}
\]	
The first term vanishes only if
\[
N^4 
=\left( \frac{ \beta}{\sigma}\right)^{4/3} |x|^{\frac{4}{3}(4+\sigma) } 
\] 
i.e. we have
\[
az^2
=\left( \frac{\beta}{\sigma}\right)^{4/3} |x|^{\frac{4}{3}(4+\sigma)} - |x|^4
\]
but the solution $|x|=0, z=0$ has to be discarded.
The only other solution is given by
\[
|x|^{\frac{4}{3}(4+\sigma) -4}= \left( \frac{\sigma}{\beta}\right)^{4/3}, z=0.
\]
Thus $| \nabla \xi + \nabla N|^2$ does not vanish outside a compact ball.
For $ (\frac{\sigma }{1+\beta })^{\frac1{(1+\sigma)}}\equiv r_0 \geq |x|$
as well as for $|x|\geq r_0 $  with $|z|^2\geq \frac{1}{a\beta^2r_0^2}$ , we can now see that
\[	|\beta \nabla N + \nabla \xi|^2 \geq \frac{1}{N^6}.\] 
Thus assuming $\exists R\in(0,\infty)$ and $\exists B,D\in(0,\infty)$ such that $\forall s\geq R$ we have
\begin{equation}
	\begin{split}
		&V'(s)s^{-7} \geq D  \\
	&sV'(s) \geq BV(s)	\\
		&(V'(s))^2/V''(s)\underset{s\to\infty}{\longrightarrow} \infty ,
	\end{split}
\end{equation}
then there exists a ball and a constant $C\in(0,\infty)$ such that outside this ball we have
\[
|\nabla U|^2, U\leq C(\mathcal{V}_2+1).
\]
Hence by arguments of \cite{HZ} the corresponding measure satisfies Log-Sobolev inequality. 
\qed
\bigskip



\subsection{Type 2 Carnot Groups}\label{2.Case.2}  
In this section, we prove the following result.

\begin{theorem} $\;$\\
	Let \[ N\equiv \left( |x|^4+ a|z|^2\right)^{\frac{1}{4}} .\]
	with $a\in(0,\infty)$ be a smooth homogeneous norm on a type 2 Carnot Group.
	Let	$
	\xi\equiv \xi(|x|)\equiv\frac{1}{|x|^\sigma}$,
	with $ \sigma\in(0,n-2)$.
	Suppose $\exists R\in(0,\infty)$ $\forall s\geq R$ we have
	\begin{equation}
		\begin{split}
				&V'(s)s^{-7} \geq D  \\
			&sV'(s) \geq BV(s)	\\
			&(V'(s))^2/V''(s)\underset{s\to\infty}{\longrightarrow} \infty ,
		\end{split}
	\end{equation}
	Then if $a>0$ is sufficiently large, then the probability measure $\mu_U$ satisfies Logarithmic Sobolev Inequality 
	\[
	\mu_U \left(f^2\log\frac{f^2}{\mu_U f^2}\right)\leq c\; \mu_U |\nabla f|^2
	\]
	with a constant $c\in(0,\infty)$ independent of a function $f$.
	For any $a>$ the measure $\mu_U$ satisfies Poincar{\'e} inequality provided that
	\[ 
	\frac{1}{N^{2(1+\sigma)}} \left(\frac14(V')^2 - \frac12 V''\right)\underset{N\to\infty}{\longrightarrow}\infty .
	\]
	
\end{theorem}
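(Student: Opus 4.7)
The plan is to transfer the Heisenberg blueprint to the general step-2 setting: apply the criterion of \cite{HZ} which reduces Log-Sobolev to a pointwise bound $|\nabla U|^2 + U \leq C(\mathcal{V}_2+1)$ outside a compact set, and then estimate $\mathcal{V}_2$ through the explicit formula \eqref{4D.U.4}.

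First, I would compute the sub-gradient of $N$ for the step-2 group. A direct calculation using the vector fields $X_i$ yields
\[ X_i N = \frac{|x|^2 x_i}{N^3} + \frac{a}{4N^3}\bigl(\textstyle\sum_k z_k \Lambda^{(k)} x\bigr)_i, \]
so that $|\nabla N|^2$ decomposes as $|x|^2/N^2$ (the leading term that is shared with the Heisenberg case) plus a cross term and a quadratic-in-$z$ correction weighted by $a$. For H-type matrices these pieces collapse as in the Heisenberg proof, but for general skew-symmetric linearly independent $\Lambda^{(k)}$ they do not, and must be tracked. In parallel one derives an upper bound on $\Delta N$ of the form $\Delta N \lesssim |\nabla N|^2/N$, replacing the sharp Kaplan identity that is available only for the H-type case.

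Second, I would rerun the square-completion performed in the Heisenberg subsection, writing
\[ \Bigl|\beta\nabla N - \frac{\sigma}{|x|^{1+\sigma}}\cdot\frac{x}{|x|}\Bigr|^2 \]
as a perfect square in the $x$-parallel direction plus a nonnegative remainder involving $\beta^2 a |x|^2/N^6$ times a function of $z$ and the matrices $\Lambda^{(k)}$. Taking $a$ large absorbs the mixed error coming from the non-orthogonality of the $\Lambda^{(k)}$ and pushes the remainder to dominate, so that outside a compact ball one obtains
\[ |\beta\nabla N + \nabla \xi|^2 \geq \frac{c}{N^6} \]
for a positive constant $c$. This is the step I expect to be the main obstacle: the Heisenberg argument used the orthogonality of $\Lambda^{(k)}$ and the explicit relation $x/|x|\cdot \nabla N = |x|^3/N^3$, neither of which is available here, and the largeness of $a$ has to be quantified so that the $z$-part really controls the cross terms uniformly.

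Third, once the lower bound $|\beta\nabla N + \nabla\xi|^2 \geq c N^{-6}$ is in place, the three conditions on $V$ enter mechanically: $V'(s)s^{-7}\geq D$ turns the leading term of $\mathcal{V}_2$ in \eqref{4D.U.4} into a coercive function of $N$; $(V')^2/V''\to\infty$ ensures the indefinite $V''$ contribution is dominated by $(V')^2$; and $sV'(s)\geq BV(s)$ controls $U$ by $|\nabla U|^2$. Together they yield $|\nabla U|^2 + U \leq C(\mathcal{V}_2+1)$ outside a large ball, and the HZ criterion produces the Logarithmic Sobolev inequality. For the Poincar{\'e} statement with arbitrary $a>0$, I would not try to lower-bound $|\beta\nabla N+\nabla\xi|^2$ globally; instead the hypothesis $N^{-2(1+\sigma)}(\tfrac14(V')^2-\tfrac12 V'')\to\infty$ directly forces the $|\nabla\xi|^2$-part of \eqref{4D.U.4} to blow up where $|x|$ is not too small, while the remaining regime where $|x|$ is comparable to $N$ is handled by the first term of $\mathcal{V}_2$ exactly as above. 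A function $\eta$ diverging with $d(\cdot,0)$ is thus produced, and the bound $(\diamond\diamond)$ gives Poincar{\'e}.
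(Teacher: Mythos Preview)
Your Log-Sobolev outline is close to the paper's, but you have one factual slip that actually makes life easier: the identity $\tfrac{x}{|x|}\cdot\nabla N=\tfrac{|x|^3}{N^3}$ \emph{does} hold on every step-2 group (Lemma~\ref{Lem.2.1N}), because $\langle x,\Lambda^{(k)}x\rangle=0$ by skew-symmetry. So the ``main obstacle'' you anticipate is not there. The paper's mechanism for large $a$ is then simply that the lower-bound constant $A$ in $|\nabla N|^2\ge A|x|^2/N^2$ can be pushed to $A\ge 1$, after which the square-completion gives the nonnegative remainder $\beta^2\tfrac{|x|^2}{N^2}\bigl(A-\tfrac{|x|^4}{N^4}\bigr)\ge 0$ and the Heisenberg argument goes through verbatim.

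Your Poincar\'e argument, however, has a genuine gap. The regime split you describe does not cover the dangerous case, and the appeal ``exactly as above'' to the Log-Sobolev bound is circular since that bound required $A\ge 1$, i.e.\ large $a$. The paper's split is the opposite of yours: for $|x|\ge R$ one shows $|\beta\nabla N+\nabla\xi|^2\ge \sigma^2/N^{2(1+\sigma)}$ (by choosing $R$ so that $A\beta>2\sigma/R^{1+\sigma}$, which kills the cross term), and then the extra hypothesis on $N^{-2(1+\sigma)}\bigl(\tfrac14(V')^2-\tfrac12 V''\bigr)$ gives coercivity there. The regime $|x|\le R$ is \emph{not} handled by the first term of $\mathcal{V}_2$ at all; instead one uses the \emph{second} term of \eqref{4D.U.4}, namely
\[
\tfrac12\Bigl(\sigma\frac{n-2-\sigma}{|x|^{2+\sigma}}-\beta\Delta N\Bigr)V',
\]
and Lemma~\ref{Lem.2.1N} bounds $|\Delta N|\le B|x|^2/N^3$, so for $|x|\le R$ and $N$ large this term is $\gtrsim \tfrac{\sigma(n-2-\sigma)}{2N^{2+\sigma}}V'$. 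This is exactly where the restriction $\sigma<n-2$ enters, and your proposal never touches it.
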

The following lemma concerning certain analytic properties of the homogeneous norm $N$ was proven in \cite{BDZ1}. 

\begin{lemma} \label{Lem.2.1N} 
	There exist constants $A,C\in(0,\infty)$
	\begin{equation}\label{S.2 grad N}
			A\frac{|x|^2}{N^2}\leq\left|\nabla N\right|^{2}\leq C\frac{|x|^2}{N^2} .
	\end{equation}
	
	and 
	there exists a constant $B\in(0,\infty)$ such that
	\begin{equation}\label{S.2laplacian N}
			|\Delta N |\leq B \frac{|x|^2}{N^3}. 
	\end{equation}
	and
	
	\begin{equation}\label{S.2n.grad N}
			\frac{x}{|x|}\cdot \nabla N  = \frac{|x|^3}{N^3} 
	\end{equation}
	and
	\[  
	%
|X_{i}X_{j}N | 
		\leq \frac{C}{N} (1+|\nabla N|^2 )
	\]	
\end{lemma}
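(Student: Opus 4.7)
The plan is to derive all four bounds from a single explicit formula for $X_i N$ obtained by differentiating the identity $N^4 = |x|^4 + a|z|^2$, after which the skew-symmetry of the matrices $\Lambda^{(k)}$, combined with the Heisenberg-type relation $|J_z x|^2 = |x|^2|z|^2$ for $J_z \equiv \sum_k z_k \Lambda^{(k)}$, produces every cancellation needed.

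First I would apply $X_i$ directly to $N^4$. Since $X_i$ acts as $\partial_{x_i}$ on functions of $x$ alone while its $z$-component hits $|z|^2$, a short computation yields
\[
X_i N \;=\; \frac{|x|^2 x_i}{N^3} \;+\; \frac{a\,(J_z x)_i}{4\,N^3},
\qquad (J_z x)_i = \sum_{k,l} z_k \Lambda^{(k)}_{il} x_l .
\]
Squaring and summing, the cross-term $\sum_i x_i (J_z x)_i$ vanishes by the skew-symmetry of the $\Lambda^{(k)}$, leaving
\[
|\nabla N|^2 = \frac{|x|^6}{N^6} + \frac{a^2 |J_z x|^2}{16\,N^6} \;=\; \frac{|x|^2}{N^6}\Bigl(|x|^4 + \tfrac{a^2}{16}|z|^2\Bigr),
\]
where the H-type identity was used in the last step. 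Comparing the bracket with $N^4 = |x|^4 + a|z|^2$ gives \eqref{S.2 grad N} with $A = \min(1, a/16)$ and $C = \max(1, a/16)$. Contracting $X_i N$ against $x_i/|x|$ and discarding the same skew-symmetric cross-term yields \eqref{S.2n.grad N} at once.

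For $\Delta N$, I would apply $X_i$ to both summands of $X_i N$ using the quotient rule and sum. The first summand contributes $(n+2)|x|^2/N^3 - 3|x|^6/N^7$, using \eqref{S.2n.grad N} to rewrite $\sum_i x_i X_i N = |x|^4/N^3$. The second summand requires the divergence $\sum_i X_i (J_z x)_i$, which equals $\tfrac12\sum_m |\Lambda^{(m)} x|^2 = (m/2)|x|^2$ by skew-symmetry and the H-type orthogonality condition, together with $\sum_i (J_z x)_i\, X_i N = \tfrac{a}{4N^3}|J_z x|^2 = \tfrac{a|x|^2|z|^2}{4N^3}$ (again the leading cross-term drops out). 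Collecting the pieces produces a formula of the form
\[
\Delta N = \frac{|x|^2}{N^3}\Bigl[\,(n+2) + \tfrac{am}{8} \;-\; 3\,\frac{|x|^4 + \tfrac{a^2}{16}|z|^2}{N^4}\Bigr],
\]
whose bracket is uniformly bounded, establishing \eqref{S.2laplacian N}. For the mixed second derivative $X_i X_j N$, I would proceed similarly: the numerator $|x|^2 x_i + \tfrac{a}{4}(J_z x)_i$ is polynomial of homogeneous degree $3$, and a second application of $X_j$ (again by the quotient rule) produces a finite sum of terms of homogeneous degree $-1$, each controlled by $1/N$ or by $|x|^2/N^3$. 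Since $|\nabla N|^2 \asymp |x|^2/N^2$ by the step already proved, each term is bounded by $\tfrac{C}{N}(1+|\nabla N|^2)$.

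The main obstacle is the bookkeeping in the second-derivative calculation: because $X_j$ carries a $z$-derivative, applying it to $(J_z x)_i$ and to $N^{-3}$ generates terms with mixed $x,z$-monomials and competing denominators $N^3$ and $N^7$, and one must check that the degrees of homogeneity and the skew-symmetry cancellations line up so that every surviving term respects the target bound. The key simplifying observations are that (i) the homogeneity of $N$ assigns each monomial a definite degree that must match $N^{-1}$ in the final bound, and (ii) the H-type identity $|J_z x| = |x||z|$ allows every occurrence of $|J_z x|^2$ to be replaced by $|x|^2|z|^2 \le |x|^2 N^4/a$, which is exactly the control required to close the estimates.
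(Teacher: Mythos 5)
The paper does not actually prove this lemma: it is imported verbatim from \cite{BDZ1} (``The following lemma \dots was proven in \cite{BDZ1}''), so there is no in-paper argument to match yours against. Your basic strategy --- differentiate $N^4=|x|^4+a|z|^2$ to get $X_iN=\frac{|x|^2x_i}{N^3}+\frac{a(J_zx)_i}{4N^3}$ and exploit skew-symmetry of the $\Lambda^{(k)}$ --- is certainly the computation underlying the cited result, and the pieces of your argument that use only skew-symmetry are sound: the cancellation $x\cdot J_zx=0$ gives \eqref{S.2n.grad N} exactly, and the crude bounds $|J_zx|\le C|z||x|$ and $\sum_k|\Lambda^{(k)}x|^2\le C|x|^2$ (valid for any finite family of matrices) are all that is needed for the upper bound in \eqref{S.2 grad N}, for \eqref{S.2laplacian N}, and for the second-derivative estimate.

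The genuine gap is your repeated appeal to the identity $|J_zx|^2=|x|^2|z|^2$. This is the \emph{defining} property of Heisenberg-type (Kaplan) groups and does not hold on a general type-2 Carnot group, which is the setting in which the lemma is invoked (Section 2.3). The upper bounds survive without it, but the lower bound $A\frac{|x|^2}{N^2}\le|\nabla N|^2$ does not: from your own formula $|\nabla N|^2=\frac{|x|^6}{N^6}+\frac{a^2|J_zx|^2}{16N^6}$, if $x\in\ker J_z$ with $x\ne0$, $z\ne0$ (unavoidable, e.g., whenever the horizontal dimension $n$ is odd, since $J_z$ is skew-symmetric and hence singular), then $|\nabla N|^2=\frac{|x|^6}{N^6}=\frac{|x|^2}{N^2}\cdot\frac{|x|^4}{N^4}$, which is \emph{not} bounded below by a multiple of $\frac{|x|^2}{N^2}$ as $|z|\to\infty$. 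So your proof establishes the lemma only for H-type groups, and the lower bound needs the nondegeneracy $|J_zx|^2\ge c|x|^2|z|^2$ as an explicit hypothesis in general. This is consistent with the paper itself, which restates the lower bound as an assumption on the norm in Theorem 3 of Section 3.2 and verifies it only for generalised Heisenberg groups in Appendix 1 (where $\min_k|L_k|\,\frac{|x|^2}{N^2}\le|\nabla N|^2\le\max_k|L_k|\,\frac{|x|^2}{N^2}$). You should either restrict the claim to the H-type case or isolate the inequality $|J_zx|^2\ge c|x|^2|z|^2$ as a standing assumption.
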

Note that, in general, using just considerations of homogeneity, since $N$ is homogeneous of degree $1,$ then, 
$|\nabla N|$ is homogeneous of degree $0 $, and one can only get a crude bound from above of the following  form 	 
\begin{equation} \label{eq:2.2}
	|\nabla N|\leq C.
\end{equation}
We note that, if \eqref{S.2 grad N} 
of Lemma 1 
is satisfied by choosing $a$ in the definition of the norm $N$ sufficiently large, we get $A\geq 1$. 
Hence,
\[ \begin{split}
	|\beta \nabla N + \nabla \xi|^2 &= \\	
	\left|-\frac{\sigma}{|x|^{1+\sigma}}\cdot\frac{x}{|x|}+ \beta\nabla N\right|^{2}&= \frac{\sigma^2}{|x|^{2(1+\sigma)}}  + 
	\beta^2 |\nabla N |^{2}
	- \frac{2\sigma\beta}{|x|^{1+\sigma}}\frac{|x|^3}{N^3}\\
	&\geq  \frac{\sigma^2}{|x|^{2(1+\sigma)}}  + 
	A\beta^2 \frac{|x|^2}{N^2}
	- \frac{2\sigma\beta}{|x|^{1+\sigma}}\frac{|x|^3}{N^3}\\
	&= \left(\frac{\sigma }{|x|^{(1+\sigma)}} 
	-  \beta \frac{|x|^3}{N^3}\right)^2
	+ 
	\beta^2  \frac{|x|^2}{N^2}\left(A
	-  \frac{|x|^4}{N^4} \right) .
\end{split}
\]	
If $A\geq 1$, similar arguments as in the case of Heisenberg group apply, and the corresponding measure $\mu_U$ satisfies Log-Sobolev inequality.\\

On the other hand, if $0<A < 1$, for $|x|\geq R$ , so that
$A\beta > \frac{2\sigma }{R^{1+\sigma}}$,
we have
\begin{equation}  \label{4D.c2.0}
	|\beta \nabla N + \nabla \xi|^2 \geq	
	\frac{\sigma^2}{|x|^{2(1+\sigma)}}  + 
	A\beta^2 \frac{|x|^2}{N^2}
	-\frac{2\sigma\beta}{|x|^{1+\sigma}}\frac{|x|^3}{N^3}\geq 	\frac{\sigma^2}{N^{2(1+\sigma)}} 
\end{equation}	  

At the same time, using 
\eqref{eq:4.D.xi.2} and Lemma \ref{Lem.2.1N}, for $|x|\leq R$, we have
\begin{equation} \label{4D.c2.1}
	\begin{split}
		\sigma\frac{n-2-\sigma}{|x|^{2+\sigma}} - \beta B \frac{|x|^2}{N^3}   \geq
		\sigma\frac{n-2-\sigma}{R^{2+\sigma}} - \beta B \frac{R^2}{N^3} > \frac{\sigma}2 
		\frac{n-2-\sigma}{N^{2+\sigma}},
	\end{split}
\end{equation}  
provided that $N\geq L$, where $L\in(0,\infty)$ satisfies
\begin{equation} \label{4D.c2.2}
N^3\geq 	L^3	 \geq 2\beta B \frac{R^{4+\sigma}}{	\sigma(n-2-\sigma)}  . 
\end{equation} 
Hencem we get that
\begin{equation} \label{4D.U.c2.3}
	\begin{split}
		\mathcal{V}_2 
		\geq
		\left|\beta\nabla N   -\frac{\sigma}{|x|^{1+\sigma}}\cdot\frac{x}{|x|}\right|^2 	\left(\frac14(V')^2 - \frac12 V''\right)\\ 
		+\frac12
		\left(\sigma\frac{n-2-\sigma}{|x|^{2+\sigma}} - \beta B \frac{|x|^2}{N^3} \right)V'
	\end{split}
\end{equation}  
goes to infinity in all direction with $N\to \infty$, provided that
\[ 
\frac{1}{N^{2(1+\sigma)}} \left(\frac14(V')^2 - \frac12 V''\right)\underset{N\to\infty}{\longrightarrow}\infty
\]
By arguments of \cite{HZ}, this implies the Poincar\'e inequality.  \\
\bigskip
\qed

\newpage
\subsection{Case 3:  Carnot Groups}\label{2.Case.3} 
Recall that if $N$ is the Kaplan Norm, we have 
\begin{equation}\label{KN}
	\Delta N=\frac{Q-1}{N}  |\nabla N|^2.
	\end{equation}

Then,
\begin{equation} \label{4D.U.5}
	\begin{split}
		\mathcal{V}_2 
		=
		|\beta \nabla N + \nabla \xi|^2 	\left(\frac14(V')^2 -\frac12 V''\right)\\ 
		-\frac12
		\left(\frac{(Q-1)\beta |\nabla N|^{2}}{N} +\Delta \xi\right)V'.
	\end{split}
\end{equation} 
Let
\begin{equation} 	\label{eq:4.D.c3xi.1}
	\xi(s)\equiv\log(\frac{1}{|x|}),  
\end{equation}
then  
\begin{equation} \label{eq:4.D.c3xi.2}
	\nabla\xi(x)   = -\frac{1}{|x|}\cdot\frac{x}{|x|},\qquad
	\Delta\xi(x)    = -  \frac{n-2}{|x|^2} .
\end{equation}  
\label{EndXi4.D}
Suppose $N$ is the Kaplan norm and consider
\[ U\equiv V(\beta N+\xi).\]
Using \eqref{KN} for Kaplan norm, we have 
\begin{equation} \label{4D.c.3.1}
	\begin{split}
		\mathcal{V}_2 
		=
		\left|\beta\nabla N   -\frac{1}{|x|}\cdot\frac{x}{|x|}\right|^2 	\left(\frac14(V')^2 - \frac12 V''\right)\\ 
		+\frac12
		\left( \frac{n-2}{|x|^2} - \beta \frac{(Q-1) |\nabla N|^{2}}{N} \right)V'
	\end{split}
\end{equation} 

First, we observe that, since $\nabla \xi= -\frac{1}{|x|}\cdot\frac{x}{|x|}$, so
\[
|\beta \nabla N + \nabla \xi|^2 \geq \left(\beta |\nabla N| -\frac{1}{|x|}
\right)^2.
\]
We note that,  if for any $\varepsilon\in(\frac12,2)$
\[
  |\nabla N| \leq    \frac{\varepsilon}{|x|} , 
\]
then we have
\[  \frac{n-2}{|x|^2} - \beta \frac{(Q-1) |\nabla N|^{2}}{N}  \geq  \frac{1}{|x|^2} \left( (n-2) -2\beta\frac{Q-1}{N}\right).  
\]
One can prevent the right-hand side to be less than zero for sufficiently large $N$. In particular, for $|\nabla N| \leq    \frac{\varepsilon}{|x|}  $  and  $N> 4\beta \frac{Q-1}{n-2} $, we have
\[  \frac{n-2}{|x|^2} - \beta \frac{(Q-1) |\nabla N|^{2}}{N}  \geq  \frac{1}{|x|^2} \left( \frac{n-1}{2}\right) \geq  \frac{1}{N^2} \left( \frac{n-1}{2}\right).
\]
In this case, by our assumption on $V(s),$ we get
\[
\frac12
\left( \frac{n-2}{|x|^2} - \beta \frac{(Q-1) |\nabla N|^{2}}{N} \right)V'(\xi N)  \geq
   \frac{n-1}{2N^2} V'(\xi N) \underset{N\to\infty}{\longrightarrow} \infty.
\]
\label{sprawdzic-niepotrzebne}
Since by homogeneity $\sup|\nabla N|\equiv C<\infty$, we have
\begin{equation} \label{4D.c.3.2}
	|\beta \nabla N + \nabla \xi|^2 \geq	
	\frac{1}{|x|^2}  
	-\frac{2\beta}{|x|}C .
\end{equation}	  
Hence, for $|x|\leq R$ , so that
$\frac{1}{1+2\beta C } >  R $,
we have
\begin{equation} \label{4D.c.3.3}
	|\beta \nabla N + \nabla \xi|^2 \geq
	\frac{1}{|x|} \left(		\frac{1}{|x|}  
	- 2 \beta C 
	\right) \geq \frac{1}{N}.
\end{equation}
On the other hand, for $|x|\geq R$, we have
\[
  \frac{n-2}{|x|^2} - \beta \frac{(Q-1) |\nabla N|^{2}}{N}   \geq
 \frac{n-2}{|x|^2} - \beta \frac{(Q-1) C^{2}}{N}   \geq \frac{n-2}{R^2} - \beta \frac{(Q-1) C^{2}}{N} /
\]
Thus, choosing 
\[ N \geq 2\beta \frac{(Q-1) C^{2}}{(n-2)}R^2, \]
we have in this region
\[
\frac{n-2}{|x|^2} - \beta \frac{(Q-1) |\nabla N|^{2}}{N}      \geq \frac{n-2}{2 R^2} . 
\]
Summarising, 
choosing $V$ so that
\[ \frac1s^2 V'(s) \underset{s\to\infty}{\longrightarrow} \infty,\]
we obtain
\[
\mathcal{V}_2 \underset{N\to\infty}{\longrightarrow} \infty.
\]
Hence, by arguments of \cite{HZ}, our measure $\mu_{V(N+\xi)}$ satisfies the Poincar{\'e} inequality. 
\label{EndSec.2}

\newpage
\section{Multiplicative Taming}
\label{Multiplicative Taming}

\subsection{Setup}	
Define 
\[
U\equiv  V( \xi(|x|)N),
\]
with a smooth norm $N$, real twice differentiable outside zero functions $V$ and $\xi\equiv \xi(|x|)\geq 1$ and 
 $\xi$ decreasing on $\mathbb{R}^+$. 
Note that in our case, the following holds
\begin{equation}	\label{eq:3.1.1}
	\begin{split}\nabla\xi(|x|) & = \frac{x}{|x|}\xi'\\
		\\
		\Delta\xi(|x|) & =\nabla\cdot\left(\frac{x}{|x|}\xi'\right)= 
		\frac{n-1}{|x|}\xi' + \xi''.
	\end{split}
\end{equation}  
We will assume that 
\[ 0< Z\equiv \int e^{-U}d\lambda <\infty, \]
and consider the following probability measure on $\mathbb{G}$
\[d\mu \equiv \frac1{Z}e^{-U}d\lambda. \]
In the current case, we have
\begin{equation} \label{eq:3.1.2}
	\begin{split}
		\mathcal{V}_2 
		&=
		| \nabla \log N + \nabla \log\xi|^2 \; (\xi N)^2	\left(\frac14(V')^2 -\frac12 V''\right)  \\
		&\\
		&-\frac12
		\left(  \frac{\Delta N}{N}  + 2    \xi' \frac{x}{|x|}  \cdot\frac {\nabla N}{\xi N} +  \frac{n-1}{|x|}\frac{\xi'}{\xi} +\frac{\xi''}{\xi}  \right) \xi N\;V'.
	\end{split}
\end{equation} 


\subsection{Type 2 Case} \label{3.2:Type 2 Case}

In this section, we prove the following result.
\begin{theorem}
	Let $G$ be  a Carnot group  of type $2$ with the horizontal dimension $n$. For $a\in(0,\infty)$, let \[N=(|x|^4+a|z|^2)^\frac14\] 
	be a smooth homogeneous norm on $G$ for which we have 
	\[ A\frac{|x|^2}{N^2} \leq |\nabla N|^2.\]
	Suppose $\xi(s)=s^{-\sigma}$ with $\sigma\in(0,\infty) $. \\
	The following measure 
	\[
	d\mu\equiv \frac1Z e^{-V(\xi N)} d\lambda
	\]
	satisfies the Logarithmic Sobolev inequality provided that one of the following conditions holds:\\
	\vspace{0.25cm}
	
(i)	If $A\geq 1$ and $\sigma\neq 1 $
for
	\[  V'(s) /s^2 \underset{s\to\infty}{\longrightarrow} \infty\]
	and if $\sigma=1$ and $A > 1$
		\[  V'(s) /s^2 \underset{s\to\infty}{\longrightarrow} \infty\]
	 
		\vspace{0.25cm}
		
	(ii)	If $A \leq 1$ and $n-2-\sigma\geq 2 $ 
	
	\[  V'(s) /s^2 \underset{s\to\infty}{\longrightarrow} \infty. \]
\end{theorem}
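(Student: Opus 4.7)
The plan is to adapt the additive type-$2$ analysis of Section~\ref{Sec.2} to the multiplicative ansatz $U=V(\xi N)$ with $\xi(|x|)=|x|^{-\sigma}$, establish $\mathcal{V}_2\to\infty$ as $N\to\infty$, and then invoke the criterion of \cite{HZ} to pass from coercivity to the logarithmic Sobolev inequality, after checking $|\nabla U|^{2},\,U\le c(\mathcal{V}_2+1)$. Setting $W\equiv\xi N$ and substituting $\xi(s)=s^{-\sigma}$ into \eqref{eq:3.1.2}, a direct calculation gives $\xi'/\xi=-\sigma/|x|$, $\xi''/\xi=\sigma(\sigma+1)/|x|^2$, and $\frac{n-1}{|x|}\frac{\xi'}{\xi}+\frac{\xi''}{\xi}=-\frac{\sigma(n-2-\sigma)}{|x|^2}$. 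Splitting $\nabla N$ into the parts parallel and orthogonal to $x/|x|$ and using $\tfrac{x}{|x|}\cdot\nabla N=|x|^3/N^3$ from Lemma~\ref{Lem.2.1N}, one arrives at
\[
|\nabla W|^{2}=|x|^{-2\sigma}|\nabla N|_{\perp}^{2}+\frac{(|x|^{4}-\sigma N^{4})^{2}}{N^{6}|x|^{2(\sigma+1)}},\qquad \frac{\Delta W}{W}=\frac{\Delta N}{N}-\frac{2\sigma|x|^{2}}{N^{4}}-\frac{\sigma(n-2-\sigma)}{|x|^{2}},
\]
where $|\nabla N|_{\perp}^{2}=|\nabla N|^{2}-(|x|^{3}/N^{3})^{2}$.

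For Case~(i), $A\ge1$, Lemma~\ref{Lem.2.1N} yields $|\nabla N|_\perp^{2}\ge\frac{|x|^2}{N^2}(A-|x|^4/N^4)\ge0$ since $|x|^4/N^4\le1$. A case split on $\sigma$ shows that outside a compact set at least one of the two summands of $|\nabla W|^2$ is strictly positive: if $\sigma>1$ the radial square is $\ge(\sigma-1)^{2}N^{2}/|x|^{2(\sigma+1)}$ everywhere; if $\sigma<1$ the radial square vanishes only on the locus $|x|^{4}=\sigma N^{4}$ where $|\nabla N|_\perp^{2}\ge(1-\sigma)|x|^{2}/N^{2}>0$; and if $\sigma=1$ the radial square vanishes only on $z=0$, where the strict hypothesis $A>1$ gives $|\nabla N|_\perp^{2}\ge(A-1)|x|^{2}/N^{2}>0$. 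Combined with $V'(s)/s^{2}\to\infty$, which dominates both the $V''|\nabla W|^{2}$ and the $|V'\Delta W|$ corrections, the leading contribution $\tfrac14(V'(W))^{2}|\nabla W|^{2}$ forces $\mathcal{V}_{2}\to\infty$.

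In Case~(ii), $A\le1$, the perpendicular bound is too weak, so coercivity is extracted from the divergence term: $|\Delta N|\le B|x|^{2}/N^{3}$ gives
\[
-\tfrac{1}{2}V'(W)\Delta W\ \ge\ \tfrac{1}{2}W V'(W)\left[\frac{\sigma(n-2-\sigma)}{|x|^{2}}-\frac{(B+2\sigma)|x|^{2}}{N^{4}}\right],
\]
and the hypothesis $n-2-\sigma\ge2$ makes the first summand dominate. Since $V'(s)/s^{2}\to\infty$ implies $W V'(W)\to\infty$ on the taming region where $W\to\infty$, this forces $\mathcal{V}_{2}\to\infty$.

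To promote coercivity to the logarithmic Sobolev inequality I verify $|\nabla U|^{2},\,U\le c(\mathcal{V}_{2}+1)$. The first estimate is essentially free because the leading contribution to $\mathcal{V}_{2}$ is $\tfrac14(V'(W))^{2}|\nabla W|^{2}=\tfrac14|\nabla U|^{2}$, while the second follows from the growth hypothesis by monotonicity, giving $V(s)\le c\,sV'(s)$ for large $s$. The criterion of \cite{HZ} then closes the argument. The chief obstacle is the borderline $\sigma=1,\,A=1$: along the horizontal axis $z=0$ both the radial square of $|\nabla W|^{2}$ and the perpendicular bound simultaneously degenerate, so the gradient alone cannot produce coercivity; this is precisely what the hypotheses exclude in Case~(i) by demanding $A>1$ when $\sigma=1$, while the hypothesis $n-2-\sigma\ge2$ in Case~(ii) restores coercivity through the virial term $\sigma(n-2-\sigma)/|x|^{2}$ when $A\le1$.
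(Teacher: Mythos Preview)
Your treatment of Case~(i) is essentially the paper's argument in different clothing: your orthogonal decomposition $|\nabla W|^{2}=|x|^{-2\sigma}|\nabla N|_{\perp}^{2}+\frac{(|x|^{4}-\sigma N^{4})^{2}}{N^{6}|x|^{2(\sigma+1)}}$ is algebraically equivalent to the paper's completion of the square in \eqref{eq:3.2.2}, and your sub-case split on $\sigma\lessgtr 1$ plays the same role as the paper's split into $|x|\le R$ and $|x|\ge R$.

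In Case~(ii) you and the paper part ways. The paper does \emph{not} use the Laplacian term as the engine of coercivity: it keeps working with the gradient and notes (see \eqref{eq:3.2.6}) that once $\sigma\ge 2$ one has $(\sigma-1)^{2}\ge 1$, so $|\nabla\log W|^{2}\ge \frac{(\sigma-1)^{2}}{|x|^{2}}-\frac{1-A}{N^{2}}\ge \frac{A}{N^{2}}$ uniformly; the Laplacian contribution \eqref{eq:3.2.7} is checked only to be bounded below. You instead discard the gradient term and try to extract growth from the virial piece $\tfrac{1}{2}WV'(W)\cdot\frac{\sigma(n-2-\sigma)}{|x|^{2}}$.

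There is a genuine gap in your Case~(ii) argument. You write that $WV'(W)\to\infty$ ``on the taming region where $W\to\infty$'', but coercivity must hold outside a compact set in \emph{every} direction, and $W=N|x|^{-\sigma}$ does not go to infinity along rays with $|x|$ comparable to $N$: there $W\sim N^{1-\sigma}$, which is bounded (or even $\to 0$) once $\sigma\ge 1$. On that set your bracket is only $O(N^{-2})$, so the product $WV'(W)\cdot O(N^{-2})$ need not diverge. The paper's route avoids this because it controls $|\nabla\log W|^{2}$ from below by $A/N^{2}$ \emph{globally}, so the leading term $\tfrac14(V')^{2}W^{2}|\nabla\log W|^{2}$ carries the factor $W^{2}$ and not merely $W$. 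In short: reading the hypothesis $n-2-\sigma\ge 2$ as a virial condition is a reasonable interpretation of the statement, but to make it work you would still need a gradient lower bound in the region $|x|\sim N$---which is precisely what the paper supplies via the extra requirement $\sigma\ge 2$.
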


\textit{Proof}:
In case of type 2 Carnot groups with the smooth norm $N=(|x|^4+a|z|^2)^\frac14$, using Lemma \ref{Lem.2.1N},  
we have
\begin{equation} \label{eq:3.2.1}
	\begin{split}
		|  \nabla \log N +  \nabla \log \xi|^2  = \left(\frac{\xi'}{\xi}\right)^2  + 2\frac{\xi'}{\xi} \frac{x\cdot \nabla N}{|x| N}
		+ \left(\frac{\nabla N}{ N}\right)^2 \\
		\geq \left(\frac{\xi'}{\xi}\right)^2 +2\frac{\xi'}{\xi} \frac{|x|^3}{N^4} +A \frac{|x|^2}{N^4}
	\end{split}
\end{equation} 
with the constant $A$ given in Lemma \ref{Lem.2.1N}.
In particular for $\xi(s)=s^{-\sigma}$, we get

\begin{equation} \label{eq:3.2.2}
	\begin{split}
	|\nabla \log N +  \nabla \log \xi |^2  
	=  \frac{\sigma^2}{|x|^2}  - 2 \frac{\sigma}{|x|} \frac{|x|^3}{N^4} 
	+\frac{|\nabla N|^2}{N^2}     
		%
	\geq  \frac{1 }{|x|^2 }\left( \sigma    -   \frac{|x|^4}{N^4}\right)^2 +\frac{|x|^2}{N^4}(A - \frac{|x|^4}{N^4})
	\end{split}
\end{equation} 

If $A>1$, by taking $a\in(0,\infty)$ sufficiently large,  
we have 
\[
|\nabla \log N +  \nabla \log \xi |^2  
\geq \frac{1 }{|x|^2} \left(  \sigma    -   \frac{|x|^4}{N^4} \right)^2 +\frac{|x|^4}{N^6}(A^2 - 1).
\]	
Then, for any $R\in[0,\infty)$ and all $|x|\leq R$, if $N$ is such that $\frac{|x|^4}{N^4}\leq \frac1{2}\sigma$, we have
\[
|\nabla \log N +  \nabla \log \xi |^2  
\geq   \frac{\sigma^2}{4 R^2} . 
\]	
On the other hand, for all $|x|\geq R$, we have
\begin{equation} \label{eq:3.2.3}
|\nabla \log N +  \nabla \log\xi|^2\geq \frac{1}{N^2} \left(  \sigma    - 1  \right)^2 +\frac{R^2}{N^4}(A  - 1).
\end{equation}  
In this case, we also have
\begin{equation} \label{eq:3.2.4}
		\begin{split}
- \frac12\left(  \frac{\Delta N}{N}  + 2    \xi' \frac{x}{|x|}  \cdot\frac{\nabla N}{\xi N} +  \frac{n-1}{|x|}\frac{\xi'}{\xi} +\frac{\xi''}{\xi}  \right) &=
- \frac12\left(  \frac{\Delta N}{N}  -      \frac{2\sigma}{|x|}  \cdot \frac{|x|^3}{N^4}  -\sigma\frac{(n-2-\sigma)}{|x|^2}  \right)\\
&\geq  \frac{\sigma}2\frac{(n-2-\sigma)}{|x|^2} + \left(  \sigma    -\frac{B}2\right) \frac{|x|^2}{N^4},
	\end{split}
\end{equation}  
which diverges to $\infty$ if $|x|\to 0$, provided $\sigma<n-2$, and otherwise remains bounded.\\

Then, choosing 
\begin{equation} \label{eq:3.2.5}
\left(	\frac{(\sigma-1)^2}{N^2} +\frac{R^2(A -1)}{N^4}\right) V'(\xi N)\underset{N\to\infty}{\longrightarrow}\infty,
\end{equation} 
we secure the conditions for 
the Logarithmic Sobolev
inequality. 

\vspace{0.25cm}
\label{2nd Case: A less than 1}

\noindent If 
$0<A \leq 1$ 
and 
$n-2-\sigma \geq 0$, 
choosing 
$\sigma\geq 2$,
we have
\begin{equation} \label{eq:3.2.6}
	\begin{split}
		|\nabla \log N +  \nabla \log \xi |^2  
	&	\geq  \frac{\sigma^2}{|x|^2}  - 2 \frac{\sigma}{|x|} \frac{|x|^3}{N^4} 
		+\frac{|\nabla N|^2}{N^2}     
		= \frac{1 }{|x|^2 }\left( \sigma    -   \frac{|x|^4}{N^4}\right)^2 +\frac{|x|^2}{N^4}(A - \frac{|x|^4}{N^4})\\
		&\geq 
		\frac{1 }{|x|^2 }\left( \sigma    -   1\right)^2 -\frac{1}{N^2}(1-A ) \geq \frac{A}{N^2}.
	\end{split}
\end{equation} 
In this case, we also have
\begin{equation} \label{eq:3.2.7}
	\begin{split}
		- \frac12\left(  \frac{\Delta N }{N} + 2    \xi' \frac{x}{|x|}  \cdot\frac{\nabla N}{\xi N} +  \frac{n-1}{|x|}\frac{\xi'}{\xi} +\frac{\xi''}{\xi}  \right) &=
		- \frac12\left(  \frac{\Delta N }{N}  -      2\sigma  \cdot \frac{|x|^2}{N^4}  -\sigma\frac{(n-2-\sigma)}{|x|^2}  \right)\\
		&\geq \frac{\sigma}2\frac{(n-2-\sigma)}{|x|^2} + \left(\sigma    - \frac{B}2\right) \frac{|x|^2}{N^4},
	\end{split}
\end{equation}  
which diverges to $\infty$ if $|x|\to 0$ and otherwise remains bounded.\\

Then, choosing 
\begin{equation} \label{eq:3.2.8}
\frac{A^2}{N^2}   V'(\xi N)
\underset{N\to\infty}{\longrightarrow}\infty,
\end{equation} 
we secure the conditions for 
the Logarithmic Sobolev inequality. 
\qed

\newpage 

\subsection{Case: Kaplan Norm on a Carnot Group} \label{Type 2 Case: Kaplan Norm}
In this section, we prove the following general result.
\begin{theorem}
	Let $N$ be the Kaplan norm of a Carnot group $G$ of homogeneous dimension  $Q$ and the horizontal dimension $n$. Let $C\equiv \sup |\nabla N|$.
	Suppose $\xi(s)=s^{-\sigma}$ with $\sigma\leq 1$.
	Suppose  $C < \sigma$ and $ n-3\geq 2C$,  and that
	\[ (V'(s))^2/V''_+(s) \underset{s\to\infty}{\longrightarrow} \infty\]
	and for some $\varepsilon\in(0,\infty)$
	\[
	\varepsilon (V'(s))^2\geq \max (V(s),sV'(s)).
	\]
	Then, the probability measure
	\[
	d\mu\equiv \frac1Z e^{-V(\xi N)} d\lambda
	\]
	satisfies the Logarithmic Sobolev inequality.
\end{theorem}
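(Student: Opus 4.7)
The plan is to apply the Hebisch--Zegarli\'nski criterion used in the preceding theorems: for the Log-Sobolev inequality it suffices to verify that outside some ball, $\mathcal{V}_2 := \tfrac14|\nabla U|^2 - \tfrac12 \Delta U$ diverges to infinity in all directions while satisfying the growth comparison $|\nabla U|^2 + U \leq a(1 + \mathcal{V}_2)$.

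First I would substitute $\xi(s) = s^{-\sigma}$ and the Kaplan identity $\Delta N = (Q-1)|\nabla N|^2/N$ into the general expression (3.1.2), obtaining
\[
\mathcal{V}_2 = (\xi N)^2 |\nabla \log N + \nabla \log \xi|^2 \bigl(\tfrac14(V')^2 - \tfrac12 V''\bigr) + \xi N\, V'(\xi N)\,\bigl[\tfrac{\sigma(n-2-\sigma)}{2|x|^2} + \tfrac{\sigma\,x\cdot\nabla N}{|x|^2 N} - \tfrac{(Q-1)|\nabla N|^2}{2 N^2}\bigr].
\]
By $(V')^2/V''_+ \to \infty$, the coefficient of the first summand is at least $\tfrac18(V')^2$ for large argument. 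The key geometric bound is
\[
|\nabla \log N + \nabla \log \xi|^2 \geq (\sigma - C)^2/|x|^2,
\]
which follows by expanding the square and using $|x\cdot\nabla N|/(|x| N)\leq |\nabla N|/N \leq C/N \leq C/|x|$ (the last inequality from the Carnot-group homogeneity bound $|x|\leq N$), together with the strict inequality $C<\sigma$. Consequently the first summand of $\mathcal{V}_2$ is at least $(\sigma-C)^2 N^2 (V'(\xi N))^2/(8|x|^{2\sigma+2})$.

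For the drift term I would bound $\sigma(x\cdot\nabla N)/(|x|^2 N)\geq -\sigma C/|x|^2$ and $(Q-1)|\nabla N|^2/(2N^2)\leq (Q-1)C^2/(2N^2)$. The hypothesis $\sigma\leq 1$ gives $n-2-\sigma\geq n-3$, and the hypothesis $n-3\geq 2C$ then makes the combined $1/|x|^2$ contribution at least $\sigma[(n-3)-2C]/(2|x|^2)\geq 0$. The residual negative piece $-(Q-1)C^2/(2N^2)\cdot \xi N V'$ is absorbed by the first summand, because the ratio of the two behaves like $N^{\sigma - 1}/V'(\xi N)$, which tends to $0$ since $\sigma \leq 1$ and $V'$ grows at least linearly (consequence of $\varepsilon(V')^2 \geq sV'$). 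The identity $|\nabla U|^2 = (\xi N)^2|\nabla\log N + \nabla\log\xi|^2 (V')^2$ shows $|\nabla U|^2$ is bounded by a multiple of the first summand of $\mathcal{V}_2$, and $U = V(\xi N) \leq \varepsilon (V')^2$ is comparable to the same quantity, so the growth condition follows at once.

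The main obstacle I foresee is the \emph{horizontal-cone} regime $|x|\sim N$ with $\sigma$ close to $1$: here $\xi N \sim N^{1-\sigma}$ grows slowly and $|\nabla\log N + \nabla\log \xi|$ degenerates as $(1-\sigma)/N$, so $V'(\xi N)$ cannot be used as a large multiplicative factor to swamp errors. The two hypotheses $C<\sigma$ and $n-3\geq 2C$ are precisely what keeps $\mathcal{V}_2$ from collapsing along this direction: $C<\sigma$ prevents the gradient factor from degenerating faster than $1/|x|$, and $n-3\geq 2C$ prevents the drift bracket from turning negative. Any significant weakening of either would allow the drift residue to dominate in this direction and break the HZ criterion.
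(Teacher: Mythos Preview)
Your route is the same as the paper's: insert $\xi(s)=s^{-\sigma}$ and the Kaplan relation into formula (3.1.2), extract a lower bound on $|\nabla\log N+\nabla\log\xi|^2$ from the hypothesis $C<\sigma$, check that the drift bracket is controlled by the singular term $\sigma(n-2-\sigma)/|x|^2$, and appeal to \cite{HZ}. Your gradient estimate $(\sigma-C)^2/|x|^2$ is in fact sharper than the paper's: the paper also reaches the intermediate quantity $(\sigma/|x|-|\nabla N|/N)^2$ in (3.3.4) but then applies $|x|\le N$ to obtain only $(\sigma-C)^2/N^2$, supplementing this with a separate small-$|x|$ bound (3.3.3). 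Your use of $\sigma\le 1$ together with $n-3\ge 2C$ to make the combined $1/|x|^{2}$ coefficient in the drift nonnegative is exactly the mechanism behind the paper's assertion that the drift term ``is stable for small $|x|$'', though the paper does not spell this out.

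One step in your write-up does not stand as stated. From $U\le\varepsilon(V')^2$ you conclude that $U$ is ``comparable to the same quantity'' (the first summand of $\mathcal V_2$). But that summand carries the prefactor $(\xi N)^2|\nabla\log N+\nabla\log\xi|^2$, which by your own bound is of size at least $(\sigma-C)^2 N^2/|x|^{2\sigma+2}$; along the horizontal cone $|x|\sim N$ this prefactor is $\asymp N^{-2\sigma}\to 0$, so $(V')^2$ alone is \emph{not} dominated by the first summand there, and the inequality $U\le a(1+\mathcal V_2)$ does not ``follow at once''. This is precisely the regime you correctly identify as the main obstacle in your final paragraph. The paper's own proof is briefer still on this point---after (3.3.4)--(3.3.5) it simply asserts that ``we obtain a coercive bound with potential $\mathcal V_2$'' and cites \cite{HZ} without verifying the growth comparison---so your proposal is not less complete than the published argument, but the claim that the growth condition is immediate overstates what has been shown.
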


Remark that, in particular, $V(s)=s^p$ with $p\geq 2$
satisfies the conditions of the above theorem.\
\vspace{0.25cm}

\textit{Proof}:
First, we note that in the special case of $N$ being the Kaplan norm, we have
\begin{equation} \label{eq:3.3.1}
	\begin{split}
			\mathcal{V}_2 
		&=
	\left| \nabla \log N +  \frac{\xi'}{\xi} \frac{x}{|x|}\right|^2 (\xi N )^2	\left(\frac14(V')^2 -\frac12 V''\right)\\
	&\qquad -\frac12
	\left(  \frac{(Q-1)  |\nabla N|^{2}}{N^2}  + 2    \xi' \frac{x}{|x|}  \cdot\frac{\nabla N }{\xi N}+  \frac{n-1}{|x|}\frac{\xi'}{\xi} +\frac{\xi''}{\xi}  \right) \xi N\;V' 	
	\end{split}
\end{equation} 
For $\xi(s)=s^{-\sigma}$,  with $C\equiv \sup|\nabla N|$, we have
\begin{equation} \label{eq:3.3.2}
	\begin{split}
		|  \nabla \log N +  \nabla \log \xi|^2  = \left(\frac{\xi'}{\xi}\right)^2  + 2\frac{\xi'}{\xi} \frac{x\cdot \nabla N}{|x| N}
		+ \left(\frac{\nabla N}{ N}\right)^2 \\
		= \left(\frac{\sigma}{|x|}\right)^2  - 2\frac{\sigma}{|x|} \frac{x\cdot \nabla N}{|x| N}
		+ \left(\frac{\nabla N}{ N}\right)^2\\
	\geq \left(\frac{\sigma}{|x|}\right)^2  - 2\frac{C \sigma}{|x|N}   
	+ \left(\frac{\nabla N}{ N}\right)^2	.
	\end{split}
\end{equation} 
Hence, for any $R\in(0,\infty)$ and all $|x|\leq R$ and $N\geq 4R C/\sigma,$ we have
\begin{equation} \label{eq:3.3.3}
		|  \nabla \log N +  \nabla \log \xi|^2   
		\geq  \frac{\sigma^2}{2R^2}   .
\end{equation} 
Without further information about $|\nabla N|$ it is difficult to get a useful lower bound. 
 
However, we notice that in general, if   we have
 $\sigma   > C  \equiv \sup |\nabla N|$, using $N \geq |x|$, we get

\begin{equation} \label{eq:3.3.4}
	\begin{split}
		|\nabla \log N +  \nabla \log \xi |^2  
		\geq   
	 \left(\frac{\sigma}{|x|}\right)^2  - 2\frac{\sigma}{|x|} \frac{x\cdot \nabla N}{|x| N}
	+ \left(\frac{\nabla N}{ N}\right)^2 \geq \left(\frac{\sigma}{|x|} - \frac{ |\nabla N|}{N}	 \right)^2 \\
	 \geq 
	\left(\frac{\sigma}{|x|} - \frac{C}{N}	 \right)^2 \geq  \frac{(\sigma - C)^2}{N^2}  	 .
	\end{split}
\end{equation}  
For the same $\xi(s)=s^{-\sigma}$, 
we have 
\begin{equation} \label{eq:3.3.5}
	\begin{split}
-\left(  \frac{(Q-1)  |\nabla N|^{2}}{N^2}  + 2    \xi' \frac{x}{|x|}  \cdot\frac{\nabla N}{\xi N} +  \frac{n-1}{|x|}\frac{\xi'}{\xi} +\frac{\xi''}{\xi} \right)\qquad\qquad\qquad \\
\qquad\qquad=
\sigma\frac{n-2-\sigma }{|x|^2} +2\sigma \frac{1}{|x|} \frac{x}{|x|}\cdot \frac{\nabla N }{N}
-  \frac{(Q-1)  |\nabla N|^{2}}{N^2} \\
\geq \sigma\frac{n-2-\sigma }{|x|^2} - 2\sigma \frac{C}{|x| N}  
-  \frac{(Q-1) C^{2}}{N^2} .
	\end{split}
\end{equation} 
Thus, the corresponding term in $\mathcal{V}_2$ is stable for small $|x|$.
 
In this way, we obtain a coercive bound with potential $\mathcal{V}_2,$ and using arguments of \cite{HZ}, we  arrive at the Logarithmic Sobolev inequality.

\qed

	\label{EndSec.3}
\newpage

\label{Sec.4A}
	\section{Multiplicative Taming II}
	\label{Multiplicative Taming 2}
 
 Let $\mathbb{\xi}(s)=\log\left(e+{   \frac{1}{s}}\right),$ and for $0<L<1,$ define
\[
\tilde{\xi}(|x|)=\begin{cases}
\xi(|x|) & if\;\;|x|<1\\
\frac{(|x|-L)^{2}}{(1-L)^{2}}\xi(|x|) & if\;\;|x|\geq1
\end{cases},
\]
Let 
\[
U_{\xi,V}\equiv (1+\tilde{\xi}(|x|))V(N),
\]
and consider the following measure
\[
{ d\mu_{\xi,V}=\frac{e^{- U_{\xi,V} }}{Z}d\lambda }
\]
defined with a homogeneous norm $N$
\begin{theorem}
Suppose $N$ is a Kaplan norm satisfying 
\[ x\cdot\nabla N\geq0,\]
and such that 
we have
\[
 V(N)/V'(N),\; V(N)/V''(N)\to 0 \qquad  as \; N\to \infty 
\]
Then, the measure $\mu_{\xi,V}$
satisfies the Logarithmic Sobolev inequality.  
\end{theorem}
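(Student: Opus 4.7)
The approach is the $\mathcal{V}_2$-potential method laid out in the introduction: compute $\mathcal{V}_2\equiv\tfrac14|\nabla U|^2-\tfrac12\Delta U$ for $U=(1+\tilde{\xi}(|x|))V(N)$, show it diverges outside a compact set, verify the companion bound $|\nabla U|^2+U\leq a(1+\mathcal{V}_2)$, and invoke the criterion of \cite{HZ} for the Log-Sobolev inequality.

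First I would run the Leibniz expansion. Since $\tilde{\xi}$ depends only on $|x|$, the sub-Laplacian acts on it as the Euclidean Laplacian in $\mathbb{R}^n$, giving $\nabla\tilde{\xi}=\tilde{\xi}'(|x|)\tfrac{x}{|x|}$ and $\Delta\tilde{\xi}=\tilde{\xi}''(|x|)+\tfrac{n-1}{|x|}\tilde{\xi}'(|x|)$. Using the Kaplan identity $\Delta N=\tfrac{Q-1}{N}|\nabla N|^2$, a direct computation regroups $\mathcal{V}_2$ as
\begin{align*}
\mathcal{V}_2 &=(1+\tilde{\xi})|\nabla N|^2\!\left[\tfrac14(1+\tilde{\xi})(V')^2-\tfrac12 V''-\tfrac{Q-1}{2N}V'\right] \\
&\quad +V'\!\left[\tfrac12 V(1+\tilde{\xi})-1\right]\nabla N\cdot\nabla\tilde{\xi}+\tfrac14 V^2|\nabla\tilde{\xi}|^2-\tfrac12 V\,\Delta\tilde{\xi},
\end{align*}
with $V,V',V''$ evaluated at $N$.

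Next I would split the exterior of a large compact ball into regimes and produce a divergent lower bound on $\mathcal{V}_2$ in each. In the center-direction regime $|x|\to 0$ with $N\to\infty$, where $|\nabla N|$ may vanish and the $|\nabla N|^2$-piece is useless, the singularity of $\tilde{\xi}=\xi=\log(e+1/|x|)$ takes over: $-\tfrac12 V(N)\Delta\tilde{\xi}\sim\tfrac{(n-2)V(N)}{2|x|^2}$ and $\tfrac14 V(N)^2|\nabla\tilde{\xi}|^2\sim\tfrac{V(N)^2}{4|x|^2}$, both diverging because $V(N)\to\infty$. For $|x|$ bounded below and $N\to\infty$, the leading terms are the two positive squares $\tfrac14(1+\tilde{\xi})^2(V')^2|\nabla N|^2$ and $\tfrac14 V^2|\nabla\tilde{\xi}|^2$; the hypotheses $V/V',V/V''\to 0$ guarantee $V',V''\gg V$, so the corrections $\tfrac12 V''$ and $\tfrac{Q-1}{2N}V'$ are eventually absorbed into the $(V')^2$ term. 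In the regime $|x|\to\infty$, the extended definition gives $\tilde{\xi}(|x|)\sim|x|^2/(1-L)^2$ and $|\nabla\tilde{\xi}|^2\sim|x|^2$, so $\tfrac14 V^2|\nabla\tilde{\xi}|^2\to\infty$ regardless of any degeneracy of $|\nabla N|^2$.

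The cross term is the delicate point: $\nabla\tilde{\xi}\cdot\nabla N=\tfrac{\tilde{\xi}'(|x|)}{|x|}\,x\cdot\nabla N$, so the hypothesis $x\cdot\nabla N\geq 0$ pins its sign. Where the contribution is unfavourable, I would absorb it by Young's inequality into the two positive squares, which is exactly what the coercivity of $V',V''$ over $V$ makes possible. The companion bound $|\nabla U|^2+U\leq a(1+\mathcal{V}_2)$ then follows since $\tfrac14|\nabla U|^2$ is already built into $\mathcal{V}_2$ up to the Laplacian corrections, and $U=(1+\tilde{\xi})V$ is negligible against $(V')^2|\nabla N|^2+V^2|\nabla\tilde{\xi}|^2$ under the growth hypotheses. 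I expect the principal obstacle to be the control of this cross term in the transition zone where $|x|$ is small and $|\nabla N|$ is still non-negligible: several contributions have comparable size there, and the hypothesis $x\cdot\nabla N\geq 0$ is essential to prevent a catastrophic sign cancellation so that the Young-type absorption proceeds without spoiling the overall lower bound.
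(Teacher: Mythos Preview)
Your approach is essentially the same as the paper's. Both compute the coercivity quantity explicitly (the paper writes it as $(1-\alpha)|\nabla U_{\xi,V}|^{2}-\Delta U_{\xi,V}$, you as $\mathcal{V}_2$), split according to the size of $|x|$ (the paper at the breakpoint $|x|=1$ of the piecewise definition of $\tilde{\xi}$, you by asymptotic regimes $|x|\to 0$, $|x|$ bounded, $|x|\to\infty$), use the hypothesis $x\cdot\nabla N\geq 0$ to pin the sign of the cross term $\nabla\tilde{\xi}\cdot\nabla N$, and then absorb the remaining negative pieces into the dominant positive contribution $V(N)^2|\nabla\tilde{\xi}|^2$ using the growth hypotheses on $V$.
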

\textbf{Remark:}  As we show in Appendix 1, the condition $x\cdot\nabla N\geq0$ holds for example in the class of generalised Heisenberg group.

\begin{proof}
\noindent To show that the measure $   \mu_{U_{\xi,V}}$ 
satisfies the Logarithmic Sobolev inequality, it is sufficient to show that under our conditions for some $\alpha \in(0,1)$, there exists a constant $C\in(0,\infty)$ such that for all $N$ sufficiently large, we have
\[
{U_{\xi,V}}\leq C\left((1-\alpha)|\nabla {U_{\xi,V}}|^{2}-\Delta {U_{\xi,V}}   +1\right).
\]
\\
First, we note two basic facts about   $\mathbb{\xi}(s)=\log\left(e+{   \frac{1}{s}}\right)$.\\
 Namely, we have
 \[  
 \xi'(s)  = 
-\frac{1}{(1+es)s} ,\qquad 
 \xi''(s) =\frac{1+2es }{(1+es)^{2}s^{2}}.
\]
For the case $|x|\geq1 $, we have
\[
\nabla\tilde{\xi}(|x|)={   \frac{2(|x|-L)}{(1-L)^{2}}\xi\frac{x}{|x|}-\frac{(|x|-L)^{2}}{(1-L)^{2}}\frac{x}{(1+e|x|)|x|^{2}},}
\]
and so
\[
\left|\nabla\tilde{\xi}(|x|)\right|^{2}={   \frac{4(|x|-L)^{2}}{(1-L)^{4}}\xi^{2}+\frac{(|x|-L)^{4}}{(1-L)^{4}}\frac{1}{(1+e|x|)^{2}|x|^{2}}-4\frac{(|x|-L)^{3}}{(1-L)^{4}}\frac{\xi}{(1+e|x|)|x|}.}
\]
and 
\[\begin{split}
\Delta \tilde{\xi}(|x|)=\frac{2\xi}{(1-L)^{2}}-\frac{4(|x|-L)}{(1-L)^{2}}\frac{1}{(1+e|x|)|x|}+\frac{2(|x|-L)}{(1-L)^{2}}\xi\frac{(n-1)}{|x|} \\
-\frac{(|x|-L)^{2}}{(1-L)^{2}}\left(\frac{(n-2)+(n-3)e|x|}{(1+e|x|)^{2}|x|^{2}}\right).
\end{split}
\]

First, consider the case $|x| \geq 1$.
For the Kaplan norm $N$, using the relation ${   \Delta N=\frac{(Q-1)|\nabla N|^{2}}{N}}$, we have

\begin{equation}
\begin{array}{ll}
  & {(1-\alpha)|\nabla {U_{\xi,V}}|^{2}-\Delta {U_{\xi,V}}}  = \\
& = {   (1-\alpha)\left(|\nabla\tilde{\xi}|^{2}V(N)^{2}+(1+\tilde{\xi})^{2}V'(N)^{2}|\nabla N|^{2}+2(1+\tilde{\xi})V(N)V'(N)\nabla\tilde{\xi}\cdot\nabla N\right)} 
\\
 & {   -\Delta\tilde{\xi}V(N)-2V'(N)\nabla\tilde{\xi}\cdot\nabla N-(1+\tilde{\xi})V''(N)|\nabla N|^{2}-(1+\tilde{\xi})V'(N)\frac{(Q-1)|\nabla N|^{2}}{N}},
\end{array}
\end{equation}
and hence
\begin{equation}
\begin{array}{ll}
 & {(1-\alpha)|\nabla {U_{\xi,V}}|^{2}-\Delta {U_{\xi,V}}}  = 
\\
 & ={   (1-\alpha)V(N)^{2}\left(
 \frac{4(|x|-L)^{2}}{(1-L)^{4}}\xi^{2}+\frac{(|x|-L)^{4}}{(1-L)^{4}}\frac{1}{(1+e|x|)^{2}|x|^{2}}
-4\frac{(|x|-L)^{3}}{(1-L)^{4}}\frac{\xi}{(1+e|x|)|x|}\right)}

 \\
\\
 & +V(N)\left({   
 -\frac{2\xi}{(1-L)^{2}} 
 +\frac{4(|x|-L)}{(1-L)^{2}}\frac{1}{(1+e|x|)|x|}
 -\frac{2(|x|-L)}{(1-L)^{2}}\xi\frac{(n-1)}{|x|} 
 +\frac{(|x|-L)^{2}}{(1-L)^{2}}\left(\frac{(n-2)+(n-3)e|x|}{(1+e|x|)^{2}|x|^{2}}\right)}\right)\\
\\
 & +2V'(N)\left((1-\alpha)(1+\tilde{\xi})V(N)-1\right)
 \left({ \frac{2(|x|-L)}{(1-L)^{2}}\xi-\frac{(|x|-L)^{2}}{(1-L)^{2}}\frac{1}{(1+e|x|)|x|}}\right){ \frac{x}{|x|}\cdot\nabla N}\\
\\
 & +(1+\tilde{\xi})|\nabla N|^{2}\left((1-\alpha)(1+\tilde{\xi})V'(N)^{2}-V''(N)-{   V'(N)\frac{(Q-1)}{N}}\right).
\end{array}\label{eq:1}
\end{equation}

If   ${   \frac{x}{|x|}\cdot\nabla N\geq0}$,
then the terms $-4\frac{(|x|-L)^{3}}{(1-L)^{4}}\frac{\xi}{(1+e|x|)|x|},-\frac{2\xi}{(1-L)^{2}},-\frac{2(|x|-L)}{(1-L)^{2}}\xi\frac{(n-1)}{|x|},  -\frac{(|x|-L)^{2}}{(1-L)^{2}}\frac{1}{(1+e|x|)|x|}$ in (\ref{eq:1}) are negative for  $|x|\geq1,$ and the next goal
is to bound them by the term $\frac{4(|x|-L)^{2}}{(1-L)^{4}}\xi^{2}$ in (\ref{eq:1}).  First, we show that
\[
4\frac{(|x|-L)^{3}}{(1-L)^{4}}\frac{\xi}{(1+e|x|)|x|}  {\leq} \frac{1}{1+e}\frac{4(|x|-L)^{2}}{(1-L)^{4}}\xi^{2},
\]
i.e.
\[
\frac{(|x|-L)}{(1+e|x|)|x|}  {\leq}\frac{1}{1+e} \log\left(e+{   \frac{1}{|x|}}\right),
\]
which is true since $0<L $ and $|x|\geq1.$ Secondly, we have
\[
\frac{2\xi V(N)}{(1-L)^{2}}  {\leq}\frac14(1-\alpha)V(N)^{2}\frac{4(|x|-L)^{2}}{(1-L)^{4}}\xi^{2}
\]
i.e.
\[
1  {\leq} \frac12(1-\alpha)V(N)\frac{(|x|-L)^{2}}{(1-L)^{2}}\log\left(e+{   \frac{1}{|x|}}\right),
\]
which is true for $N$ sufficiently large as $V(N)$ is unbounded increasing. Next, we prove that  for $N$ sufficiently large,
\[
\frac{2(|x|-L)}{(1-L)^{2}}\xi\frac{(n-1)}{|x|}V(N)  {\leq}\frac14 (1-\alpha)V(N)^{2}\frac{4(|x|-L)^{2}}{(1-L)^{4}}\xi^{2},
\]
i.e.
\[
\frac{(n-1)}{2(1-\alpha)}  {\leq}V(N)\frac{|x|(|x|-L)}{(1-L)^{2}}\log\left(e+{   \frac{1}{|x|}}\right),
\]
which is true. Finally, using the fact that $|\nabla N|\leq C,$
we look at
\[
\frac{C(|x|-L)^{2}}{(1-L)^{2}}\frac{V'(N)\left((1-\alpha)(1+\tilde{\xi})V(N)\right)}{(1+e|x|)|x|}  {\leq}(1-\alpha)V(N)^{2}\frac{4(|x|-L)^{2}}{(1-L)^{4}}\xi^{2},
\]
i.e.
\[ 
C\frac{(|x|-L)^{2}}{ ( 1+e |x|)|x|}  {\leq}\frac{V(N)}{V'(N)}\log\left(e+{   \frac{1}{|x|}}\right).
\]
Since $(|x|-L)^{2}\leq|x|(|x|-L)\leq|x|(1+e|x|)$
and $\frac{V(N)}{V'(N)}\to 0$ as $N\to \infty$ , then the above inequality is easily achievable with a small multiplier on the right-hand side. \\

\textbf{Remark:} \textit{In the case
where ${   \frac{x}{|x|}\cdot\nabla N<0}$, we would
require the term $ \frac{2(|x|-L)}{(1-L)^{2}}\xi $ of (\ref{eq:1}) to be bounded by the
term $ \frac{4(|x|-L)^{2}}{(1-L)^{4}}\xi^{2}$ of (\ref{eq:1}): 
\[
\frac{4(|x|-L)}{(1-L)^{2}}\xi V'(N)\left((1-\alpha)(1+\tilde{\xi})V(N)\right)  {\leq}(1-\alpha)V(N)^{2}\frac{4(|x|-L)^{2}}{(1-L)^{4}}\xi^{2},
\]
i.e.
\[
V'(N)(|x|-L)  {\leq}V(N),
\]
we would need the condition $NV'(N)\leq V(N),$ which  generally does not hold for polynomial $V(N)$.}\\
\vspace{0.25cm}

 \label{leq 1}To study the case $|x|<1,$ we note that

\[
\nabla\tilde{\xi}=-\frac{x}{(1+e|x|)|x|^{2}},
\qquad so\quad
|\nabla\tilde{\xi}|^{2}=\frac{1}{(1+e|x|)^{2}|x|^{2}},
\]
and
\[
\Delta\tilde{\xi}=-\frac{(n-2)+(n-3)e|x|}{(1+e|x|)^{2}|x|^{2}}.
\]
Thus
\[
\begin{array}{ll}
& {   (1-\alpha)|\nabla {U_{\xi,V}}|^{2}-\Delta {U_{\xi,V}}} =\\
&{   (1-\alpha)\left(|\nabla\tilde{\xi}|^{2}V(N)^{2}+(1+\tilde{\xi})^{2}V'(N)^{2}|\nabla N|^{2}+2(1+\tilde{\xi})V(N)V'(N)\nabla\tilde{\xi}\cdot\nabla N\right)}\\
\\
 & {   -\Delta\tilde{\xi}V(N)-2V'(N)\nabla\tilde{\xi}\cdot\nabla N-(1+\tilde{\xi})V''(N)|\nabla N|^{2}-(1+\tilde{\xi})V'(N)\frac{(Q-1)|\nabla N|^{2}}{N}}\\
\\
 & ={   (1-\alpha)\left(\frac{V(N)^{2}}{(1+e|x|)^{2}|x|^{2}}\right)}+{   2\left(1-(1-\alpha)(1+\tilde{\xi})V(N)\right)V'(N)\frac{x}{(1+e|x|)|x|^{2}}\cdot\nabla N}\\
\\
 & {   +\frac{(n-2)+(n-3)e|x|}{(1+e|x|)^{2}|x|^{2}}V(N)+(1+\tilde{\xi})|\nabla N|^{2}\left((1-\alpha)(1+\tilde{\xi})V'(N)^{2}-V''(N)-V'(N)\frac{(Q-1)}{N}\right)}\\
\\
 & \geq c  {U_{\xi,V}}.
\end{array}
\]
with some positive constant $c$.

The last inequality is true since by the conditions on $V(N)/V'(N),\; V(N)/V''(N)\to 0 $ as
 $N\to \infty$, and the fact that

\[
2{   \left((1-\alpha)(1+\tilde{\xi})V(N)\right)V'(N)\frac{x}{(1+e|x|)|x|^{2}}\cdot\nabla N}  {\leq}{   (1-\alpha)\left(\frac{V(N)^{2}}{(1+e|x|)^{2}|x|^{2}}\right)},
\]

i.e., since $|\nabla N|\leq C,$ it suffices to show

\[
{   C\log\left(e+{   \frac{1}{|x|}}\right)(1+e|x|)|x|V'(N)}  {\leq}{   V(N)}.
\]

Since $|x|<1$, for sufficiently large $N$ we have
\[
{   C\log\left(e+{   \frac{1}{|x|}}\right)(1+e|x|)|x|V'(N)}\leq C(1+e|x|)^{2}V'(N){   \leq C(1+e)^{2}V'(N)\leq \frac12 V(N)}, 
\]
which ends the proof.
\end{proof}
\label{EndSec.4}

\newpage
 
\section{Universality Hypothesis on  Carnot Groups}$\;$\\ 
\label{Sec.5} 

It is natural to expect that if we have a probability measure on a Carnot group which has a density $e^{-U}$ with respect to the Haar measure dependent on a smooth homogeneous norm
and satisfies certain coercive inequalities, then a similar property holds for any other smooth homogeneous norm on the same group. 

On the bases of our experience, \cite{HZ}, \cite{I}, \cite{BDZ1}, \cite{BDZ2}, \cite{ChFZ},
one could conjecture that for any Carnot group  and any smooth homogeneous norm $N,$ there exists $U=U(N)$ such that the corresponding measure $\mu_U$ satisfies the Poincar\'e Inequality.

On the other hand, we know that stronger Log-Sobolev type inequalities cannot be satisfied for smooth norm case, \cite{HZ}, Theorem 6.3, and \cite{BDZ2}, Theorem 10. One may conjecture that in this case we have such inequalities satisfied provided that we restrict ourselves to probability measures $e^{-U}$ with $U=U(K),$ where $K$ is a non-smooth homogeneous norm, and once it holds for one such norm, it holds for any other non-smooth norm, possibly with some adjustment of the function $U$.

Below, we provide an illustration in the indicated direction.\\ 
Recall the following definition

\textbf{Definition}:\\
We call homogeneous norm on (the homogeneous Carnot group) $G$,  a continuous  function $K :G \to [0,\infty)$   such that:

(i) $K(\delta\lambda (x)) = \lambda K(x) \forall \lambda>0\; and\; x\in G$; \\
(ii) $K(x) > 0$ iff $x \neq 0$.\\

$K$ is called symmetric iff \\
(iii) $K(x^{-1}) = K(x)$ for every $x\in G$.
\\

Let $G$ be a stratified group and $x = (x^{(1)},...,x^{(r)})\in G$.
In this context, we have the following family of  homogeneous norms
\[
|x|_G := \left(\sum_{j=1}^r |x^{(j)}|^{\frac{\alpha}{j}}\right)^{\frac1{\alpha}}
\]
with $\alpha\geq 1$. In  particular, we have the following   homogeneous norm on $G$ which is smooth out of the origin
\[
|x|_G := \left(\sum_{j=1}^r |x^{(j)}|^{\frac{2r!}{j}}\right)^{\frac1{2r!}} .
\] 
At the other hand we have the following example of a non-smooth homogeneous norm 
\[
\rho(x) := \sum_{j=1}^N |x_j|^{\frac1{\sigma_j}}.
\]
We remark that given two homogeneous norms $N, \tilde N$
and a strictly positive (smooth) function, one can define a new homogeneous norm as follows \cite{Acz}
\[ \label{K}
K\equiv K(d,N) = N \zeta\left( \frac{\tilde N}{N}. \right)
\]
 Moreover, if the function $\zeta$ is concave, then the expression on the right-hand side is the perspective function in the sense of \cite{M} which is jointly concave. Then, if both norms $N, \tilde N$ satisfy the triangle inequality, so does $K$.
In particular, this is the case of a pair consisting of the control distance $d$ and the Kaplan norm on the Heisenberg group \cite{C}. Then, choosing $\zeta$ to be a root function, we get a new perfect distance function in the form of the geometric mean of $d$ and $N$.

The following property of equivalence  of the homogeneous norms is well established, see e.g. \cite{BLU}. Let $K$ be a homogeneous norm on $G$. Then there exists a constant $c > 0$ such that
\begin{equation} \label{eq.5.0} 
\forall x\in G\qquad \qquad |\delta_{c^{-1}}(x)|_G \leq K(x), N(x), d(x) \leq |\delta_{c}(x)|_G .
\end{equation}

\noindent Using a homogeneous norm $K,$ we introduce the following  probability
measure \[ d\mu_{U,K}= e^{-U(K)} d\lambda,\] 
with the normalization constant $Z\in(0,\infty)$ .\\

\begin{theorem} \label{Thm5.6}
Let $K_0$ be a homogeneous norm on a Carnot group $\mathbb{G}$.
Suppose a probability measure $\mu\equiv \mu_{U,K_0}$ 
satisfies a coercive bound
\[
\int |f|^q \mathcal{V}_q(K_0) d\mu \leq C\int |\nabla f|^q d\mu +D \int | f|^q d\mu
\]
with some constants $C,D\in(0,\infty)$ independent of $f$, and 
\[   \mathcal{V}_q(K_0) \geq \alpha_q |U'(K_0)|^q\]
for some $\alpha_q\in(0,\infty)$ and some $K_0$ large enough.\\
Let  $K$ be a different homogeneous norm on $\mathbb{G}$.
Suppose $U'$ satisfies the doubling property  $|U'(c^2t)|\leq  A_c |U'(t)|$ with a constant $A_c\in(0,\infty)$.
There exists $\varepsilon\in(0,\infty)$ such that
\[ \|\nabla (K-K_0)\|<\varepsilon \]
implies
\[
\int |f|^q \left( \tilde {\mathcal{V}}_q(K_0)  \right) d\mu_{U,K} \leq   2^{q-1}C\int |\nabla f|^q d\mu_{U,K} +  D \int | f|^q d\mu_{U,K} ,
\]
with
\[\tilde {\mathcal{V}}_q(K_0)  \geq  \left(\alpha_q -  \frac12\left(\frac{2A_c\varepsilon}{q}\right)^q  \right)|U'(K_0) |^q  \]
for large $K_0$.
\end{theorem}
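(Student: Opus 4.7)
The plan is to apply the assumed coercive inequality for $\mu_{U,K_0}$ to a suitably reweighted test function, using the Radon--Nikodym density to convert the result into a bound for $\mu_{U,K}$. Setting $\Psi := e^{(U(K_0)-U(K))/q}$ gives $\Psi^q\,d\mu_{U,K_0} = (Z_1/Z_0)\,d\mu_{U,K}$, so $\Psi^q$ is (up to the constant $Z_1/Z_0$) the density of $\mu_{U,K}$ with respect to $\mu_{U,K_0}$. Applying the hypothesis to $F := f\Psi$, every integral on the left rescales by this common factor, while on the right the product rule $\nabla F = \Psi\nabla f + f\nabla\Psi$ combined with $|a+b|^q \leq 2^{q-1}(|a|^q+|b|^q)$ yields
\[ |\nabla F|^q \leq 2^{q-1}\Psi^q\bigl(|\nabla f|^q + |f|^q\,|\nabla\log\Psi|^q\bigr). \]
Dividing out $Z_1/Z_0$ and rearranging produces
\[ \int |f|^q\bigl(\mathcal{V}_q(K_0)-2^{q-1}C|\nabla\log\Psi|^q\bigr) d\mu_{U,K} \leq 2^{q-1}C\int|\nabla f|^q d\mu_{U,K} + D\int|f|^q d\mu_{U,K}, \]
which is precisely the advertised conclusion with $\tilde{\mathcal{V}}_q(K_0) := \mathcal{V}_q(K_0) - 2^{q-1}C|\nabla\log\Psi|^q$.

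Everything then reduces to the pointwise bound $|\nabla\log\Psi| \leq (A_c\varepsilon/q)|U'(K_0)|$, since this combined with the hypothesis $\mathcal{V}_q(K_0) \geq \alpha_q|U'(K_0)|^q$ reproduces the announced lower bound on $\tilde{\mathcal{V}}_q$. To prove it we would split
\[ U'(K_0)\nabla K_0 - U'(K)\nabla K = U'(K_0)(\nabla K_0 - \nabla K) + (U'(K_0) - U'(K))\nabla K. \]
The first piece is immediately bounded by $\varepsilon|U'(K_0)|$ from the hypothesis $\|\nabla(K-K_0)\|<\varepsilon$, while the second piece is where the doubling hypothesis and norm equivalence must play the essential role.

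The hard part is precisely this second piece: one needs the refined estimate $|U'(K_0) - U'(K)| = O(\varepsilon\,|U'(K_0)|)$, rather than the crude bound $(1+A_c)|U'(K_0)|$ that follows from doubling alone. The universal norm equivalence \eqref{eq.5.0} gives $c^{-2}K_0 \leq K \leq c^2 K_0$, and the hypothesis $|U'(c^2 t)| \leq A_c|U'(t)|$ then yields $|U'(K)| \leq A_c|U'(K_0)|$ uniformly. To extract the $\varepsilon$-smallness of the difference we would exploit that $\|\nabla(K-K_0)\|<\varepsilon$ together with integration of $\nabla(K-K_0)$ along horizontal curves joining the origin to $x$, invoking the equivalence between $K_0$ and the sub-Riemannian control distance, forces $|K-K_0|/K_0 = O(\varepsilon)$ as $K_0 \to \infty$. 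Coupled with doubling and with the boundedness of $|\nabla K|$ (which is homogeneous of degree $0$), this asymptotic comparison produces the required $|\nabla\log\Psi| \leq (A_c\varepsilon/q)|U'(K_0)|$ in exactly the regime $K_0$ large singled out in the conclusion.
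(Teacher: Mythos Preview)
Your substitution $F=f\Psi$ with $\Psi=e^{(U(K_0)-U(K))/q}$ and the subsequent product-rule estimate $|\nabla F|^q\le 2^{q-1}\Psi^q(|\nabla f|^q+|f|^q|\nabla\log\Psi|^q)$ reproduce exactly the paper's argument (which is written out for $q=1$, replacing $f$ by $fe^{U(K_0)-U(K)}$, and then asserted to extend to general $q$). The divergence between your route and the paper's is in how $|\nabla(U(K_0)-U(K))|$ is bounded.

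The paper does not split $U'(K_0)\nabla K_0-U'(K)\nabla K$ as you do. Instead it interpolates linearly, setting $K_s=sK_0+(1-s)K$, and writes
\[
|\nabla(U(K_0)-U(K))|\;\le\;\int_0^1 |U'(K_s)|\,|\nabla(K_0-K)|\,ds\,.
\]
Since norm equivalence gives $K_s\le c^2K_0$, the doubling hypothesis yields $|U'(K_s)|\le A_c|U'(K_0)|$ directly, and together with $|\nabla(K_0-K)|<\varepsilon$ this produces $|\nabla(U(K_0)-U(K))|\le A_c\varepsilon\,|U'(K_0)|$ in one stroke. The interpolation route never needs to say anything about $|K-K_0|$ itself, nor about the difference $|U'(K_0)-U'(K)|$.

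Your split, by contrast, generates the term $(U'(K_0)-U'(K))\nabla K$, and the argument you sketch for it does not close. Integrating $\nabla(K-K_0)$ along horizontal curves does indeed give $|K-K_0|=O(\varepsilon K_0)$, but the final step ``coupled with doubling \dots\ produces the required $|\nabla\log\Psi|\le (A_c\varepsilon/q)|U'(K_0)|$'' is where the gap lies. The doubling hypothesis $|U'(c^2t)|\le A_c|U'(t)|$ is a bound on ratios of values at comparably sized arguments, not a modulus of continuity: it tells you $|U'(K)|\le A_c|U'(K_0)|$, but it does \emph{not} imply $|U'(K_0)-U'(K)|=O(\varepsilon\,|U'(K_0)|)$. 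A doubling $U'$ is free to oscillate within the shell $[c^{-2}K_0,c^2K_0]$ with no control on increments, so your second piece stays at size $O(|U'(K_0)|)$ rather than $O(\varepsilon\,|U'(K_0)|)$, and the $\varepsilon$-smallness needed for the lower bound on $\tilde{\mathcal V}_q$ is lost. The paper's interpolation bypasses this issue entirely by never isolating $U'(K_0)-U'(K)$.
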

 
\begin{proof} We consider the case $q=1$. For other $q\in(0,\infty)$ the arguments are similar.
Replacing  $f\geq 0$ by $f e^{(U(K_0)-U(K))}$, 
we get
\[
\int f \mathcal{V}_1(K_0)   \leq  C \int |\nabla f| d\mu_{U,K}  + C\int f   |\nabla(U(K_0)-U(K))|  d\mu_{U,K} + D \int f d\mu_{U,K}.
\]
Since with $K_s = sK_0+ (1-s)K\leq c^2K_0$, using the fact that by our assumption $|U'(c^2t)|\leq  A_c |U'(t)|$, with some $A_c\in(0,\infty)$ independent of $t$, we have
\[ |\nabla(U(K)-U(K))| \leq \int_0^1 ds |U'(K_s) | \nabla (K-K_0)| \leq 
A_c \varepsilon |U'(K_0) |
\]
Hence, 
\[
\int f\left( \mathcal{V}_1(K_0) - A_c \varepsilon |U'(K_0) |\right) d\mu_{U,K}  \leq  C \int |\nabla f| d\mu_{U,K}   +   D \int f d\mu_{U,K},
\]
from which the desired property follows.
\end{proof}
 
The merit of the above theorem is that the perturbation theory does not require second order derivatives of the homogeneous norm $K$.
Thus, if for  $K_0$ we have a coercive bound, so we do for $K,$ 
as long as it is a small perturbation of the original homogeneous norm.\\
 We recall \cite{HZ}
that by arguments involving Leibniz rule and integration by parts, for $d\mu\equiv \frac1Ze^{-U(d)}d\lambda$ with
\[
\mathcal{V}_1(d) \equiv U'(d) |\nabla d|^2-\Delta d,
\]
we have 
\[
\int f \mathcal{V}_1(d) d\mu \leq \int |\nabla f| d\mu . 
\]
For the control distance $d,$ we have $|\nabla d|=1$, so if one has a well-behaved Laplacian of the distance (e.g. locally unbounded only from below and with moderate growth dominated by $U'(d)$ in the large), we have a theory admitting coercive inequalities which allows for small perturbations in which we need only smallness  of the sub-gradient of the difference of the homogeneous norms.
For an example of this type, see e.g. the case of Heisenberg group
in \cite{HZ}.
\\

We remark that along the line indicated above,  one can develop naturally a perturbation theory for the theory discussed in sections 1-4. for measures in which we were  taming the singularities as well as  those considered in \cite{BDZ1}, \cite{BDZ2}, \cite{ChFZ}.\\
 Although our description was focused on Poincar\'e and Log-Sobolev inequalities, one can provide similar development
for other inequalities as e.g. in   \cite{BDZ1} or in \cite{RZ} and \cite{RZ2} including in particular necessary and sufficient condition for exponential decay to equilibrium in Orlicz spaces.
 
\subsection{Examples} \label{Sec.5.1 Examples}

\begin{example} \label{Example.5.1}
Let $K=(1-\alpha)d+\alpha N$, for any homogeneous norm $N$.
Then, using the fact that for a homogeneous norm $|\nabla N|\leq C$ for some $C\in(0,\infty)$, if $C\geq |\nabla d|\geq \kappa >0$ for some  $\kappa\in(0,\infty)$, we have
\[
|\nabla K|^2\geq  (1-\alpha )^2|\nabla d|^2+ 2(1-\alpha )\alpha  \nabla d\cdot\nabla N +\alpha^2 | \nabla N|^2 \geq
(1-\alpha) \left((1-\alpha)\kappa^2 - 2\alpha C^2 \right)
\]
which is positive for $\alpha >0$ sufficiently small. 
Moreover, we have
\[
|\nabla K - \nabla d|=  \alpha |\nabla N-\nabla d|\leq  2 \alpha C.
\]
Hence the assumptions of the perturbation Theorem \ref{Thm5.6} can be satisfied by taking $\alpha>0$ sufficiently small.\\

\end{example} \label{EndeExample.5.1}

\begin{example}  \label{Example.5.2} In this example, we discuss a case of a semi-norm obtained as a mixture of control and Kaplan norms.\\
For our purposes, we are interested in properties of the sub-gradient and the sub-Laplacian of the homogeneous norms.
If  $d$ is the control distance and  $N$ is the Kaplan
norm associated to a sub-gradient  $\nabla\equiv (X_1,..,X_n)$, then
the following relations are satisfied, \cite{BLU}
\begin{equation} \label{eq.5.1}
|\nabla d|=1 \qquad \qquad \qquad\qquad  \Delta N = \frac{Q-1}{N} 
|\nabla N|^{2}
\end{equation}
where $|\cdot|$ denotes Euclidean norm in $\mathbb{R}^n$ and $\Delta$ denotes the sub-Laplacian.
We define a new homogeneous norm by
\[
K\equiv K(d,N) = d \zeta\left( \frac{N}{d} \right).
\]
with a smooth non-negative function $ \zeta$. This includes more general means than just weighted arithmetic mean.
Using this definition and  the equivalence relation of norms \eqref{eq.5.0}, one gets:
\begin{lemma} \label{Lem.2}
\begin{equation} \label{eq.5.2a}
(\inf \zeta ) d\leq K \leq  (\sup \zeta) d
\end{equation}

\begin{equation} \label{eq.5.2b}
|\nabla K| = \left(( \zeta - \frac{N}{d}\zeta')^2 |\nabla d|^2 +2 (\zeta - \frac{N}{d}\zeta') \zeta'  \nabla d\cdot \nabla N +  (\zeta')^2 |\nabla N |^2\right)^{\frac12}
\end{equation}

\begin{equation} \label{eq.5.2c}
\begin{split}
\Delta K  = \left( \zeta - \frac{N}{d}\zeta' \right) \Delta d  +
 \zeta'' \frac{1}{d}\left(  \nabla N  -     \frac{N}{d}  \nabla d  \right)^2  +\zeta' \Delta N
\end{split}
\end{equation}
 
\end{lemma}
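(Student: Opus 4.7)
The plan is to verify each of the three displayed identities by direct computation, working from the definition $K = d\,\zeta(N/d)$ and using the standard product and chain rules for the sub-gradient and sub-Laplacian, together with the eikonal identity $|\nabla d|^2 = 1$ from \eqref{eq.5.1}.

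For \eqref{eq.5.2a}, the bound is immediate: since $\inf\zeta \leq \zeta(N/d) \leq \sup\zeta$ pointwise and $d \geq 0$, multiplication yields $(\inf\zeta)\,d \leq K \leq (\sup\zeta)\,d$. For \eqref{eq.5.2b}, I would set $u = N/d$ and compute $\nabla u = d^{-1}\nabla N - N d^{-2}\nabla d$ by Leibniz applied to $N \cdot d^{-1}$. Then
\[
\nabla K = \zeta(u)\,\nabla d + d\,\zeta'(u)\,\nabla u = \Bigl(\zeta - \tfrac{N}{d}\zeta'\Bigr)\nabla d + \zeta'\,\nabla N,
\]
and squaring gives exactly the three terms in \eqref{eq.5.2b}.

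The substantive computation is \eqref{eq.5.2c}. I would apply the Leibniz rule $\Delta(fg) = f\,\Delta g + g\,\Delta d + 2\nabla f \cdot \nabla g$ with $f=d$ and $g = \zeta(u)$, then expand $\Delta\zeta(u) = \zeta''(u)|\nabla u|^2 + \zeta'(u)\Delta u$ by the chain rule. The inner Laplacian $\Delta u$ is handled again by Leibniz applied to $N\cdot d^{-1}$ together with the one-variable chain rule $\Delta(d^{-1}) = 2 d^{-3}|\nabla d|^2 - d^{-2}\Delta d$. Using $|\nabla u|^2 = d^{-2}|\nabla N - (N/d)\nabla d|^2$, the $\zeta''$-contribution already produces the middle term of \eqref{eq.5.2c}. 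The remaining $\zeta'$- and $\zeta$-terms collect, after expansion, into $\zeta\,\Delta d + \zeta'[\Delta N - (N/d)\Delta d]$ once the various $d^{-2}$-contributions cancel.

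The main obstacle is purely bookkeeping: there are roughly eight to ten terms generated by the two applications of Leibniz, and one must see that the mixed cross-terms $\tfrac{2\zeta'}{d}\nabla d\cdot\nabla N$ from $2\nabla d\cdot\nabla\zeta(u)$ cancel against those produced by $d\,\zeta'(u)\,\Delta u$, and similarly that the purely $\zeta' N/d^2$-contributions add to zero. The cancellation is made possible precisely by $|\nabla d|^2 = 1$; without the eikonal identity, extra terms involving $|\nabla d|^2$ would survive and the clean formula \eqref{eq.5.2c} would not hold. No delicate analysis beyond this is needed, so the proof is essentially a careful expansion.
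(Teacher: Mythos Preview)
Your approach is correct and essentially matches the paper's own derivation (given in Appendix~3): compute $\nabla K = (\zeta - \tfrac{N}{d}\zeta')\nabla d + \zeta'\nabla N$ and then take the divergence. The paper works directly from $\Delta K = \nabla\cdot(\nabla K)$ rather than via $\Delta(d\cdot\zeta(u))$, but the two routes are equivalent.

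One point to correct, however: the eikonal identity $|\nabla d|^2 = 1$ is \emph{not} what makes \eqref{eq.5.2c} work. If you carry out your own expansion, the term $+2\zeta'\tfrac{N}{d^2}|\nabla d|^2$ coming from $d\,\zeta'\Delta u$ cancels against $-2\zeta'\tfrac{N}{d^2}|\nabla d|^2$ coming from $2\nabla d\cdot\nabla\zeta(u)$, and the $\tfrac{2\zeta'}{d}\nabla d\cdot\nabla N$ cross-terms cancel in the same way---purely algebraically, regardless of the value of $|\nabla d|^2$. Formula \eqref{eq.5.2c} is therefore a general identity for $K = d\,\zeta(N/d)$ with arbitrary smooth $d>0$ and $N$, and indeed the paper's Appendix~3 computation never invokes $|\nabla d|=1$. (Minor typo: in your Leibniz formula $\Delta(fg) = f\Delta g + g\Delta d + 2\nabla f\cdot\nabla g$, the middle term should read $g\Delta f$.)
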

\hfill $\circ$

If $N$ is the Kaplan norm, then \eqref{eq.5.2c}
reads

\begin{equation} \label{eq.5.2d}
\Delta K  = \left( \zeta - \frac{N}{d}\zeta' \right) \Delta d  +
 \zeta'' \frac{1}{d}\left(  \nabla N  -     \frac{N}{d}  \nabla d  \right)^2  +\zeta' \frac{Q-1}{N} 
|\nabla N|^{2}
\end{equation}
and, since $\zeta$ is a smooth function on an interval 
$[c^{-2},c^2]$ and both  $|\nabla d|$ and $\nabla N$ are bounded, the leading term on the right-hand side is the one containing $\Delta d$.\\

Suppose  $K_s\equiv d \zeta_s(\frac{N}{d})$ is  a one parameter differentiable interpolation between $d$ and $K$ with bounded derivative $\frac{d}{ds}\zeta_s\equiv\dot \zeta_s$, then we have
\[\begin{split}
K_s &\leq d\max_s \|\zeta_s\|,\qquad  |\dot K_s| \leq 
d\max_s \|\dot\zeta_s\| \\
|\nabla K_s| &\leq  |\nabla d| \; \left( \max_s \|\zeta_s\| +  \frac{N}{d} \max_s \|\zeta_s'\|   \right)+   |\nabla N|   \max_s \|\zeta_s'\|  ,
\\
 |\nabla \dot K_s| & \leq 
 |\nabla d| \; \left(\max_s \|\dot\zeta_s\| + \frac{N}{d} \max_s \|\dot\zeta_s'\|  \right) +   |\nabla N|  \max_s \|\dot \zeta_s'\|  
\end{split}
\]

\[\begin{split}
 |\nabla(U(d)-U(K))| &= |\nabla(\int_0^1 U'(K_s) \dot K_s  ds)| \\
&\leq  \int_0^1 |U''(K_s) \dot K_s | \; |\nabla K_s| ds + \int_0^1 |U'(K_s)|\; |\nabla \dot K_s | ds 
\end{split}
\]
 Hence, the assumptions of the perturbation Theorem \ref{Thm5.6} can be satisfied by taking $\zeta$ sufficiently close to one.\\
\end{example} \label{EndExample.5.2}
%
 
In the following example, we  illustrate the above in the case of homogeneous norm which is created using the geometric mean.

\begin{example} \label{Example.5.3}

Consider $K=d^{1-\alpha}N^{\alpha},$ where $0<\alpha<1.$  
Then we have
\begin{equation} \label{eq.Ex5.E3.1}
 |\nabla K|^2 =  |(1-\alpha) \left(\frac{N}{d}\right)^\alpha \nabla d + \alpha \left(\frac{d}{N}\right)^{1-\alpha}\nabla N |^2
\end{equation}
If
\[
\frac1c d \leq  N \leq c d ,
\]
then 
\[  \frac1c   \leq  \frac{N }{d},  \frac{d }{N} \leq c   \]
and hence, for  $C\geq |\nabla d|\geq \kappa >0$ and $|\nabla N| \leq C$ , we get
\begin{equation} \label{eq.Ex5.E3.2}
 |\nabla K|^2 \geq   (1-\alpha)^2  \frac{ \kappa^2}{c^{2\alpha}}  -2 (1-\alpha) \alpha  c |\nabla d|\; | \nabla N | \geq
(1-\alpha)^2  \frac{ \kappa^2}{c^{2\alpha}}  -2 (1-\alpha) \alpha  c C^2,
\end{equation}
which can be made strictly positive for $\alpha>0$ sufficiently small.
We also have
\[\begin{split}
|\nabla K -\nabla d| \leq |(1-\alpha) \left(\frac{N}{d}\right)^\alpha -1| |\nabla d| + \alpha \left(\frac{d}{N}\right)^{1-\alpha}|\nabla N |\\
 \leq \left( |(1-\alpha) c^\alpha -1|  + \alpha c^{1-\alpha}\right)C,
\end{split}
\]
which can be made sufficiently small for $\alpha>0$ sufficiently small.
Hence the assumptions of the perturbation Theorem \ref{Thm5.6} can be satisfied by taking $\alpha>0$ sufficiently small .\\
\vspace{0.25cm}

For the sub-Laplacian, we have
\begin{equation} \label{eq.Ex5.2c}
\begin{split}
\Delta K  = \left( 1-\alpha\right) \zeta  \Delta d  -
\alpha\left( 1-\alpha\right) \zeta  d  \left(  \frac1N\nabla N  -     \frac{1}{d}  \nabla d  \right)^2  +\alpha\zeta \frac{d}{N}  \frac{Q-1}{N} |\nabla N|^2.
\end{split}
\end{equation}
Thus, for large $N,$ the possible singular behaviour is determined by 
the first term on the right-hand side.\\

\noindent Hence, in particular 
for the   Heisenberg group, we have the following conclusion (which follows from the corresponding $U$-bound via arguments of \cite{HZ}):
\begin{theorem}
If a probability measure 
\[d\mu =e^{-d^p} d\lambda\]
satisfies Log-Sobolev inequality for $p>p_0>2$, then so does the measure
\[d\nu =e^{-K^{p/\alpha}} d\lambda.\]
\end{theorem}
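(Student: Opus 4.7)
My plan is to view $K = d^{1-\alpha} N^\alpha$ as a small sub-gradient perturbation of the control distance $d$ and to apply the perturbation Theorem~\ref{Thm5.6} with potential function $U(t) \equiv t^{p/\alpha}$ and base norm $K_0 \equiv d$. First I would observe that since $0 < \alpha < 1$ and $p > p_0$, the exponent $p/\alpha$ again exceeds $p_0$, so the hypothesis applied at power $p/\alpha$ furnishes the Logarithmic Sobolev inequality for the reference measure $\mu \equiv Z^{-1} e^{-U(K_0)}\,d\lambda = Z^{-1} e^{-d^{p/\alpha}}\,d\lambda$. The integration-by-parts identity $(\diamond)$ from Section~\ref{Sec.1}, together with $|\nabla d| = 1$ and the fact that the $\Delta d$ contribution is of lower order than the leading $(U'(d))^2$ term, immediately converts this into a quantitative $U$-bound $\mu(f^2 \mathcal{V}_2(K_0)) \leq C \mu|\nabla f|^2 + D \mu f^2$ with $\mathcal{V}_2(K_0) \geq \alpha_2 (U'(K_0))^2$ comparable to $d^{2(p/\alpha - 1)}$ outside a compact set.

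Next I would verify the remaining hypotheses of Theorem~\ref{Thm5.6}. Doubling of $U'$ is transparent: $U'(c^2 t) = c^{2(p/\alpha - 1)} U'(t)$, so one may take $A_c = c^{2(p/\alpha - 1)}$. The sub-gradient closeness of $K$ to $K_0 = d$ is supplied by Example~\ref{Example.5.3}, where the estimate $\|\nabla(K - d)\|_\infty \leq (|(1-\alpha) c^\alpha - 1| + \alpha c^{1-\alpha}) C$ tends to zero as $\alpha \to 0^+$; consequently, for any prescribed threshold $\varepsilon > 0$ appearing in Theorem~\ref{Thm5.6}, taking $\alpha$ small enough secures $\|\nabla(K - K_0)\| < \varepsilon$. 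Theorem~\ref{Thm5.6} applied at $q = 2$ (the analogue of the $q = 1$ proof presented in the text) then yields a $U$-bound for the perturbed measure $\nu \equiv Z^{-1} e^{-K^{p/\alpha}}\,d\lambda$, with effective potential $\tilde{\mathcal{V}}_2 \geq \bigl(\alpha_2 - \tfrac12 (A_c \varepsilon)^2\bigr)(U'(K_0))^2$, still comparable to $d^{2(p/\alpha - 1)}$ provided $\alpha$ is small.

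To upgrade this $U$-bound for $\nu$ into the desired Logarithmic Sobolev inequality, I would finally verify the supplementary \cite{HZ} condition $|\nabla U(K)|^2 + U(K) \leq a\,(\tilde{\mathcal{V}}_2 + 1)$. Using the norm equivalence $c^{-\alpha} d \leq K \leq c^\alpha d$ (inherited from $c^{-1}d \leq N \leq cd$), the bounded-gradient estimate for $K$ from Example~\ref{Example.5.3}, and the inequality $p/\alpha > 2$, both $U(K) \lesssim d^{p/\alpha}$ and $|\nabla U(K)|^2 \lesssim d^{2(p/\alpha - 1)}$ are dominated by $\tilde{\mathcal{V}}_2 \gtrsim d^{2(p/\alpha - 1)}$, since $p/\alpha \leq 2(p/\alpha - 1)$. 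The \cite{HZ} argument then delivers LSI for $\nu$. The main obstacle I anticipate is the clean formulation of the $q = 2$ analogue of Theorem~\ref{Thm5.6} (the printed proof handles only $q = 1$) together with coordinating the smallness parameter $\alpha$ with the norm-equivalence constant $c$ so that the loss term $\tfrac12 (A_c \varepsilon)^2$ stays strictly below $\alpha_2$ and the constants in the supplementary domination estimate above remain uniform after the perturbation.
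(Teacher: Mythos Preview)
Your overall route is the one the paper has in mind: treat $K=d^{1-\alpha}N^\alpha$ as a perturbation of the control distance, carry the $U$-bound from the reference measure to $\nu$, and then invoke \cite{HZ} to obtain LSI. So the architecture is right.

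However, the obstacle you flag at the end is not merely a bookkeeping nuisance --- with the constants you actually wrote down the argument does not close. You invoke Theorem~\ref{Thm5.6} verbatim and take $A_c=c^{2(p/\alpha-1)}$ from the generic doubling hypothesis $|U'(c^2t)|\le A_c|U'(t)|$. But then $A_c\varepsilon\sim c^{2(p/\alpha-1)}\cdot\alpha$, and for any $c>1$ this is bounded below (in fact $\ge c^{2(p-1)}$ on $(0,1)$) and blows up as $\alpha\to0$, so the loss term $\tfrac12(A_c\varepsilon)^2$ can never be made smaller than $\alpha_2$. The fix is already implicit in something you wrote later: the equivalence relevant to this particular $K$ is $c^{-\alpha}d\le K\le c^{\alpha}d$, not the crude $c^{-2}d\le K\le c^2d$. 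If you rerun the interpolation step in the proof of Theorem~\ref{Thm5.6} with $K_s\le c^{\alpha}K_0$ in place of $K_s\le c^2K_0$, the correct doubling constant becomes $|U'(c^{\alpha}t)|/|U'(t)|=c^{\alpha(p/\alpha-1)}=c^{\,p-\alpha}\le c^{p}$, uniformly in $\alpha$. With this, $A\varepsilon\lesssim c^{p}\alpha\to0$ as $\alpha\to0$ and the perturbation inequality goes through; the rest of your plan (the $|\nabla U(K)|^2+U(K)\lesssim \tilde{\mathcal V}_2+1$ check) is fine.

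It is worth noting that the paper also records the explicit formula~\eqref{eq.Ex5.2c} for $\Delta K$ and observes that its only singular contribution for large $N$ is $(1-\alpha)\zeta\,\Delta d$; this points to an alternative, more direct argument in which one simply recomputes $\mathcal V_2$ for $U(K)=K^{p/\alpha}$ and feeds it straight into \cite{HZ}, bypassing Theorem~\ref{Thm5.6} altogether. Either route works once $\alpha$ is small enough to make $|\nabla K|$ bounded away from zero.
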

\end{example}
 
 \section{Ergodicity and discreteness of Spectrum}


Firstly, we recall that (See \cite{BG} and \cite{RZ})
coercive inequality of log$^\beta$-Sobolev type
is equivalent to  the following Orlicz-Sobolev inequality 
\begin{equation} \label{SO}
 \|(f-\mu f)^2\|_{\Phi,\mu} \leq C \|\nabla f\|^2_{\mu}, \tag{O-S}
\end{equation}

where on the left-hand side one has an Orlicz norm and a constant $C\in(0,\infty)$ independent of $f$.\\
Given the inequality (O-S), a theorem of Cipriani \cite{Ci} implies the following result.

\begin{theorem}
Suppose the probability measure $\mu_{\xi,N}$ satisfies conditions of Theorem I with $I\in\{1,..,5\}$ so that Logarithmic Sobolev inequality holds with Dirichlet form
\[
\mathcal{E}(f)\equiv \mu_{\xi,N} |\nabla f|^2,
\]
with the sub-gradient $\nabla$. Then, the corresponding Markov generator $L$ has a purely discrete spectrum.
\end{theorem}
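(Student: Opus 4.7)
The plan is a two-step reduction that is already partially set up in the excerpt. Step one reformulates the Log-Sobolev inequality, which is the hypothesis of each Theorem $I$ for $I\in\{1,\ldots,5\}$, as the equivalent Orlicz-Sobolev inequality \eqref{SO} recalled at the opening of Section 6, with Young function $\Phi(x)=x\log(1+x)$. Step two invokes Cipriani's theorem \cite{Ci} to promote \eqref{SO} to compactness of the resolvent $(L-\lambda)^{-1}$ on $L^2(\mu_{\xi,N})$, which for a self-adjoint operator bounded below is equivalent to the spectrum being purely discrete.

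For step one no further work is required beyond quoting the Bobkov--G\"otze type characterization used in \cite{BG}, \cite{RZ}: the Log-Sobolev inequality
\[
\operatorname{Ent}_{\mu_{\xi,N}}(f^2)\leq c\,\mu_{\xi,N}|\nabla f|^2
\]
is equivalent to the defective Orlicz-Sobolev bound
\[
\|(f-\mu_{\xi,N} f)^2\|_{\Phi,\mu_{\xi,N}}\leq C\,\mu_{\xi,N}|\nabla f|^2,
\]
with $C\in(0,\infty)$ independent of $f$, for $\Phi(x)=x\log(1+x)$. For step two, Cipriani's criterion asserts that whenever a Dirichlet form satisfies such an Orlicz-Sobolev inequality with a Young function whose complementary function grows strictly faster than a quadratic at infinity (a property that $\Phi(x)=x\log(1+x)$ manifestly enjoys), the form domain embeds compactly into $L^2(\mu_{\xi,N})$. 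Compactness of the embedding transfers to compactness of the resolvent, and hence to pure discreteness of the spectrum of $L$.

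The main technical point I expect to have to verify, rather than a deep obstacle, is that the sub-Riemannian ingredients needed to apply Cipriani's abstract theorem are in place. Concretely one needs: (i) that $\mathcal{E}(f)=\mu_{\xi,N}|\nabla f|^2$ is a densely defined closable Dirichlet form with a good core, which follows from the hypoellipticity of the sub-Laplacian together with local integrability of $e^{-U}$ away from the (controlled) singularities introduced in Sections 2--4; and (ii) that the Orlicz norm produced in step one genuinely yields compactness rather than only continuity of the embedding. The latter combines the local Poincar\'e inequality of Jerison \cite{J} (to upgrade horizontal derivatives to local integrability on balls) with the tightness supplied by the coercivity of the potential $\mathcal{V}_2$ outside a large ball, which is precisely the property established in the body of each of Theorems $1$--$5$. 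Once (i) and (ii) are in place, Cipriani's theorem delivers the conclusion.
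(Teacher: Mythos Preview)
Your proposal is correct and follows exactly the route taken in the paper: the Log-Sobolev inequality is recast as the Orlicz--Sobolev inequality (O-S) via \cite{BG}, \cite{RZ}, and then Cipriani's theorem \cite{Ci} is invoked to conclude discreteness of the spectrum. In fact you supply more detail than the paper does, since there the entire argument is the two sentences preceding the theorem statement.
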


In the situation of the theorem above, by a result of Rothaus (see e.g.\cite{GZ},\cite{ABC}), the following Poincare inequality holds
\[
\mu(f-\mu f)^2 \leq 2C \mu|\nabla f|^2,
\]
which is equivalent to the following exponential decay to equilibrium in $\mathbb{L}_2$
\[
 \| e^{tL}f-\mu f\|_2^2 \leq   e^{-2m t} \| f-\mu f\|_2^2
. \]

In conclusion, we mention that there exists a large class of nilpotent Lie groups to explore in related directions of coercive inequalities, spectrum, and ergodicity estimates, which are for the moment technically very difficult. Another interesting direction involves the analysis of systems with singular potentials. In \cite{Ze}, crystallographic groups associated to nilpotent Lie groups are constructed which, in turn, provide a way to introduce and study Dunkl-type operators.

  \newpage
  	
  \paragraph*{Appendix.1 : Generalised Heisenberg Group } $\;$\\
  \label{Appx.1.TypeGHeisenberg}

  \noindent For $1\leq j\leq n$, let $L_j \in\mathbb{R}\setminus\{0\}$. In  $\mathbb{R}^{2n+1}$, consider the algebraic group law given by
  \[(x, t) \ast (y, s)=(x+y,t+s+\sum_{j=1}^n(L_j(x_jy_{j+n}-y_jx_{j+n})
  ). \]
  We have
  \[
  X_j=\begin{cases}
  	\partial_{x_j}-L_j x_{j+n}\partial_t,\qquad j=1,..,n\\
  	\partial_{x_j} + L_{j-n} x_{j-n}\partial_t,\qquad j=n+1,..,2n .
  \end{cases}
  \]
  Then,
  \[
  [X_j,X_k]=\begin{cases}
  	2 L_j \partial_t,\qquad k=n+j\\
  	0,\qquad\qquad otherwise
  \end{cases}
  \]
  Kaplan Norm 
 
  \[ \label{KNGH}
   N=
  \left( \left( \sum_{j=1,..,n}  2|L_j|(x_j^2 +  x_{j+n}^2)   \right)^2 +16 z^2 \right)^\frac14
  \]

  Sub-gradient of Kaplan norm:
  \[
  X_j N = \begin{cases}
  	\left(  |L_j| x_j ( \sum_{k=1,..,n} 2|L_k|( x_k^2 +   x_{k+n}^2))  - 4L_j x_{j+n}z  \right)\frac1{ N^3} ,\quad j=1,..,n\\
  	\left(   |L_{j-n}| x_{j} ( \sum_{k=1,..,n} 2 |L_k|( x_k^2 +   x_{k+n}^2))  + 4 L_{j-n} x_{j-n}z  \right)\frac1{ N^3} ,\quad j=n+1,..,2n
  \end{cases}
  \]
  Using this we have,
  \[
  \begin{split}
  	|\nabla N|^2&=\sum_{j=1}^{2n} |X_j N|^2 =\\
  	&	\frac{1}{N^6} \sum_{j=1}^{n}  \left(  |L_j|\; x_j ( \sum_{k=1,..,n} 2|L_k|( x_k^2 +    x_{k+n}^2))  - 4L_j x_{j+n}z \right)^2     \\
  	& +\frac{1}{N^6}\sum_{j=n+1}^{2n} \left(   |L_{j-n}| \; x_{j} ( \sum_{k=1,..,n} 2|L_k|( x_k^2 +   x_{k+n}^2))  + 4 L_{j-n} x_{j-n}z  \right)^2 \\
  	& =\frac{  \sum_{k=1,..,n} |L_k|( x_k^2 +   x_{k+n}^2) }{N^2}
  \end{split}
  \]
  and 
  \[
  \mathbf{x}\cdot \nabla N =  \frac{2 ( \sum_{k=1,..,n} |L_k|( x_k^2 +   x_{k+n}^2))^2}{N^3} .
  \]
  Summarising, we have:
  \begin{lemma} \label{Lem.3}
  	For the Kaplan norm of Generalised Heisenberg Group we have 
\begin{equation} \label{Apx.1.Lem}
\begin{split}
\min_k|L_k| \; \frac{| \mathbf{x}|^2}{N^2} \leq 	|\nabla N|^2&=\frac{  \sum_{k=1,..,n} |L_k|( x_k^2 +   x_{k+n}^2) }{N^2} \leq \max_k|L_k| \; \frac{| \mathbf{x}|^2}{N^2}\\
	 \mathbf{x}\cdot \nabla N &=  \frac{2 ( \sum_{k=1,..,n} |L_k|
	 	( x_k^2 +   x_{k+n}^2))^2}{N^3}  \geq 0.
\end{split}
\end{equation}
  \end{lemma}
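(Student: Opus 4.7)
The plan is to verify the three assertions by direct expansion from the sub-gradient formulas for $X_j N$ displayed immediately above the lemma, so no fresh differentiation is needed. To streamline notation, I would write $T := \sum_{k=1}^{n} 2|L_k|(x_k^2 + x_{k+n}^2)$, so that $N^4 = T^2 + 16 z^2$, and observe that each $X_j N$ has the shape $(\alpha_j T + \beta_j z)/N^3$, with $\beta_j$ for the block $j\leq n$ being minus the mate of $\beta_{j+n}$ for the block $j\geq n+1$. This sign pattern is inherited from the skew-symmetric coupling of $X_j$ and $X_{j+n}$ through the $L_j \partial_t$ commutator, and it is the engine of every cancellation below.

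For the identity $|\nabla N|^{2} = \bigl(\sum_k |L_k|(x_k^2 + x_{k+n}^2)\bigr)/N^{2}$, I would expand $(X_j N)^2$ and sum in $j$. The pure-$T^2$ and pure-$z^2$ pieces contribute $\bigl(\sum_k |L_k|(x_k^2+x_{k+n}^2)\bigr)\cdot T^{2}/N^{6}$ and $\bigl(\sum_k |L_k|(x_k^2+x_{k+n}^2)\bigr)\cdot 16z^{2}/N^{6}$, respectively; adding and factoring the combination $T^2 + 16z^2 = N^4$ produces the claimed $1/N^2$ after the $N^{4}$ cancels against $N^{6}$. The only step to monitor carefully is the cross $zT$ terms: the contribution of index $j\leq n$ is proportional to $-|L_j|L_j\, x_j x_{j+n}\, zT$, while the mate at index $j+n$ gives $+|L_j|L_j\, x_{j+n} x_j\, zT$, and these cancel in pairs. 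This bookkeeping is the main (and only) obstacle; it is a routine consequence of the symmetric structure of the Kaplan norm.

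The sandwich bounds then follow at once: estimate $|L_k|$ from below and above by $\min_k|L_k|$ and $\max_k|L_k|$, pull the extremum out of the sum, and use $|\mathbf{x}|^{2} = \sum_{k=1}^n (x_k^2 + x_{k+n}^2)$.

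For the inner product $\mathbf{x}\cdot\nabla N = \sum_{j=1}^{2n} x_j\, X_j N$, the identical pairing trick makes the $zT$ cross-terms cancel between $j$ and $j+n$. What remains is $\bigl(\sum_{k=1}^n |L_k|(x_k^2+x_{k+n}^2)\bigr)\cdot T/N^{3}$, and substituting $T = 2\sum_k |L_k|(x_k^2 + x_{k+n}^2)$ gives the stated closed form $2\bigl(\sum_k|L_k|(x_k^2+x_{k+n}^2)\bigr)^{2}/N^{3}$, which is manifestly non-negative. Thus both identities and the inequality $\mathbf{x}\cdot\nabla N \geq 0$ (needed to apply the theorem of Section~\ref{Multiplicative Taming 2}) are established.
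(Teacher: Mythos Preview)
Your proposal is correct and follows exactly the paper's own route: the paper's ``proof'' consists of the displayed computations just before the lemma, where $|\nabla N|^2=\sum_j|X_jN|^2$ and $\mathbf{x}\cdot\nabla N=\sum_j x_j X_jN$ are expanded directly from the stated $X_jN$ formulas, with the cross terms between the $j$ and $j+n$ blocks cancelling. You have simply made explicit the pairing mechanism and the $T^2+16z^2=N^4$ factoring that the paper leaves implicit.
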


\paragraph*{Appendix.2 : Smooth Norms for Type 2  Nilpotent Lie Group } $\;$\\
\label{Appx.2.Type2G}

Let $\mathbb{G}$ be a step-2 group, i.e. a group isomorphic to ${   \mathbb{R}^{n+m}}$
with the group law

\[
\left(w,z\right)\circ\left(w',z'\right)=\left(w_{i}+w'_{i},~z_{j}+z'_{j}+\frac{1}{2}<\Lambda^{\left(j\right)}w,w'>\right)_{i=1,..,n; j=1,..,m}
\]
for $w,w'\in\mathbb{R}^{n},z,z'\in\mathbb{R}^{m}$, where the matrices
$\Lambda^{\left(j\right)}$  are $n \times n$ skew-symmetric and linearly independent.

For $i\in\left\{ 1,\ldots,n\right\}$ and $j\in\left\{ 1,\ldots,m\right\} $, let 
\[ X_{i}=\frac{\partial}{\partial x_{i}}+\frac{1}{2}\sum_{k=1}^{m}\sum_{l=1}^{n}\Lambda_{il}^{\left(k\right)}x_{l}\frac{\partial}{\partial z_{k}}\qquad \textrm{ and } \qquad Z_{j}=\frac{\partial}{\partial z_{j}}.
\]
Later on, $\nabla \equiv (X_i)_{i=1,..,n}$ and $\Delta \equiv \sum_{i=1,..,n} X_i^2 $ will denote the associated sub-gradient  
and sub-Laplacian, respectively.
We consider the following smooth homogeneous norm on $\mathbb{G}$

\[ N\equiv \left( |x|^4+ a|z|^2\right)^{\frac{1}{4}} \]
with $a\in(0,\infty)$.
\\
Another norm is
\[ \tilde N\equiv \left(  \left( |x|^4+ a|z|^2\right)^\frac12      + |x|^2    \right)^{\frac{1}{2}} \]
with $a\in(0,\infty)$.

\paragraph*{Appendix.3 : Sub-gradient and Sub-Laplacian of $K$} \label{Appx.3}$\;$\\
Define
\[
K\equiv K(d,N) = d \zeta\left( \frac{N}{d} \right).
\]
Due to the equivalence of homogeneous norms, the function $\zeta$ is a smooth function supported in a bounded interval $[-c^{-1},c]$.

We have
\begin{equation} \label{nabla K}
|\nabla K| = |(\zeta -  \frac{N}{d } \zeta') \nabla d +  (\zeta')\nabla N | = 
\left( (\zeta -   \frac{N}{d  } \zeta')^2 +2 \nabla d\cdot \nabla N(\zeta -   \frac{N}{d  } \zeta')(\zeta')   +  (\zeta')^2|\nabla N |^2 \right)^\frac12,
\end{equation}
and 
\begin{equation} \label{Delta K}
\begin{split}
\Delta K &=\nabla\cdot\left(( \zeta - \frac{N}{d}\zeta') \nabla d + \zeta' \nabla N\right)\\
&=\left( \zeta - \frac{N}{d}\zeta' \right) \Delta d \\
& +
\zeta'  \left( \frac{\nabla N}{d}- \frac{ N}{d}\frac{\nabla d}{d} + \frac{N}{d}\frac{\nabla d}{d} - \frac{\nabla N}{d}\right) \cdot \nabla d  
\\
&- \frac{N}{d}\zeta'' \left( \frac{\nabla N}{d} - \frac{N}{d}\frac{\nabla d}{d} \right)\cdot \nabla d+\\
&  +\zeta' \Delta N +
 \zeta'' \left(\frac{|\nabla N|^2}{d} - \frac{N}{d} \frac{\nabla N\cdot\nabla d}{d} \right)\\
&\\
&=\left( \zeta - \frac{N}{d}\zeta' \right) \Delta d \\
&\zeta'' \left(\frac{|\nabla N|^2}{d} - 2 \frac{N}{d} \frac{\nabla N\cdot \nabla d}{d} + \frac{N}{d} \frac{N}{d}\frac{\nabla d\cdot \nabla d}{d} \right)+\\
&  +\zeta' \Delta N \\
&\\
&=\left( \zeta - \frac{N}{d}\zeta' \right) \Delta d  + \zeta'' \frac{1}{d}\left(  \nabla N  -     \frac{N}{d}  \nabla d  \right)^2   +\zeta' \Delta N\\
& = \left( \zeta - \frac{N}{d}\zeta' \right) \Delta d
+\zeta'' \frac{1}{d}\left(  \nabla N  -     \frac{N}{d}  \nabla d  \right)^2  
+\zeta' \frac{Q-1}{N} 
|\nabla N|^{2}.
\end{split}
\end{equation}
Since $\zeta$ is a smooth function on a bounded interval $[-c^{-1},c]$, for some positive constant $c$, the leading term in the last formula  outside a large ball is provided by 
$\zeta\Delta d$.

\vspace{\baselineskip} 

\textbf{Data availability:} Data sharing is not applicable to this article, as no datasets were generated or analysed during the current study.
 	\label{bibliography}	
	
\label{EndBibliography}
 \end{document}